\newtheorem{theorem}{Theorem}
\newtheorem{lemma}{Lemma}[section]
\newtheorem{proposition}[lemma]{Proposition}
\theoremstyle{definition}
\newcommand\NN{{\mathbb N}}
\newcommand\QQ{{\mathbb Q}}
\newcommand\ZZ{{\mathbb Z}}
\newcommand\p{{\mathbb P}}
\newcommand\tr{\hbox to 1mm  {${}^t \!  $} }
\newcommand{\nc}{\newcommand}
\nc{\apl}{K\cup \{\infty\}}
\nc{\mfp}{\mathfrak{p}}
\nc{\dmfp}{\delta_\mfp}
\nc{\vmfp}{v_\mfp}
\nc{\ace}{\`e }
\nc{\aca}{\`a }
\nc{\aci}{\`i }
\nc{\aco}{\`o }
\nc{\acu}{\`u }
\nc{\pid}{\mathfrak{p} }
\nc{\bdm}{\begin{displaymath}}
\nc{\edm}{\end{displaymath}}
\nc{\beq}{\begin{equation}}
\nc{\eeq}{\end{equation}}
\nc{\dpid}{\delta_{\mathfrak{p}}}
\nc{\vvs}{\textquotedblleft}
\nc{\vvd}{\textquotedblright}
\nc{\os}{\mathcal{O_S}}
\nc{\mcu}{\mathcal{U}}
\title[Preperiodic points for rational functions over function fields]{Preperiodic points for rational functions defined over a rational function field of characteristic zero}
\author{Jung Kyu Canci}
\address{Jung Kyu Canci, Universit\"{a}t Basel, Mathematisches Institut, Rheinsprung $21$, CH-$4051$ Basel}
\email{jungkyu.canci@unibas.ch}
\begin{document}

\begin{abstract}
Let $k$ be an algebraic closed field of characteristic zero. Let $K$ be the rational function field $K=k(t)$. Let $\phi$ be a non isotrivial rational function in $K(z)$. We prove a bound for the cardinality of the set of $K$--rational preperiodic points for $\phi$ in terms of the number of places of bad reduction and the degree $d$ of $\phi$. \end{abstract}

\maketitle

\section{Introduction}Let $k$ be an algebraically closed field of characteristic zero. Let $K$ be the rational function field $K=k(t)$. Let $\phi\colon \p_1\to\p_1$ be an algebraic endomorphism of the projective line defined over $K$. 
For each non negative integer $m$ we denote with $\phi^m$ the $m$--iterate of $\phi$, where $\phi^0$ denotes the identity map. 
We say that $P\in \p_1(K)$ is \emph{periodic} for $\phi$ with \emph{minimal} period $n$ if $\phi^n(P)=P$, $n>0$ and  $\phi^e(P)\neq P$ for each $0< e<n$. We say that $P$ is \emph{preperiodic} for $\phi$ if there exist $n,m\in \NN$ with $n>0$ such that $\phi^{n+m}(P)=\phi^m(P)$. In other words, $P$ is preperiodic if its orbit with respect $\phi$, i.e.
$$O_\phi(P)=\{\phi^n(P)\mid n\in\NN\},$$
is finite. 
We denote by ${\rm PrePer}(\phi,K)$ the set of prepepriodic points in $\p_1(K)$ for $\phi$. 
For each endomorphism  $\phi$ and each element  $A\in {\rm PGL}_2(K)$ (the group of automorphisms of $\p_1$ defined over $K$) we denote by $\phi^A$ the conjugate $A\circ \phi\circ A^{-1}$ of $\phi$ by $A$. We consider this action of ${\rm PGL}_2(K)$ because it does not change the dynamics, in the sense that $\phi$ and $\phi^A$ determine the same dynamic for each $A\in {\rm PGL}_2(K)$.

Let us denote by $\overline{K}$ the algebraic closure of $K$. An endomorphism $\phi$ of $\p_1$, defined over $K$, is said \emph{isotrivial} if there exists an automorphism $A\in {\rm PGL}_2(\overline{K})$ such that the map $\phi^A$ is defined over $k$. We say that $\phi$ is \emph{isotrivial over $K$} if there exists an automorphism $A\in {\rm PGL}_2({K})$ such that the map $\phi^A$ is defined over $k$. The fact that $k$ is algebraically closed implies that the set  ${\rm PrePer}(\phi,K)$ is an infinite set if $\phi$ is defined over $k$ (see for example \cite{IB}). Therefore in order to obtain a finiteness result for the set ${\rm PrePer}(\phi,K)$, $\phi$ should be non isotrivial (over $K$).

In the present work we shall take in consideration the notion of good reduction for endomorphisms of $\p_1$ introduced by Morton and Silverman in \cite{MS}. In literature, it is sometimes called \emph{simple} good reduction in order to make difference from other notions of good reduction. In the present work we will take in consideration a slightly modified definition of good reduction. We say that $\phi$ has \emph{good reduction} at a place $\pid$, if there exists an automorphism $A\in {\rm PGL}_2(K)$ such that the map $A\circ \phi\circ A^{-1}$ has simple good reduction at $\pid$. We will recall in the next section the definition of simple good reduction. According to our definitions there are maps that have good reduction but not simple good reduction at some places. E.g. consider the map $\phi([X:Y])=[tX^2:Y^2]$ that has bad simple reduction at the place given by $t$, but it has good reduction at $t$ by considering the automorphism $A([X:Y])=[tX:Y]$, since $A\circ\phi\circ A^{-1}([X:Y])=[X^2:Y^2])$.

Recall that there is a natural identification between the elements of the set of endomorphisms of $\p_1$ and the elements in $K(z)$, i.e. rational functions defined over $K$, and a corresponding identification of $\p_1(K)$ with $K\cup \{\infty\}$, where each point $[x:y]\in \p_1(K)$ is identified with the point $x/y$ in $K\cup \{\infty\}$. 

     To ease notation we shall consider rational functions instead of endomorphisms of $\p_1$.


Our main result is the following one:

\begin{theorem}\label{mainT}
Let $k$ be an algebraically closed field of characteristic zero. Let $K$ be the rational function field $K=k(t)$. Let $S$ be a finite set of places of $K$ of cardinality $s\geq 1$. Let $d$ be a positive integer. Then there exists a bound $B(d,s)$ such that for each  $\phi\in K(z)$ non isotrivial over $K$ of degree $d$, with good reduction outside $S$, the inequality
$$\#{\rm PrePer}(\phi,K)\leq B(d,s)$$
holds.
\end{theorem}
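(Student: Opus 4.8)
The plan is to bound, uniformly in terms of $d$ and $s$, three quantities attached to $\phi$: (a) the minimal period of any periodic point of $\phi$ in $\p_1(K)$; (b) the total number of periodic points; and (c) the length of the pre-periodic tail of any strictly preperiodic point. Throughout I use that, with the notion of good reduction adopted here, the hypothesis ``good reduction outside $S$'' is invariant under conjugation by $\PGL_2(K)$: if $A\in\PGL_2(K)$ realizes simple good reduction of $\phi$ at $\pid$ and $B\in\PGL_2(K)$, then $AB^{-1}$ realizes it for $\phi^B$. Hence I may freely replace $\phi$ by a $\PGL_2(K)$-conjugate without changing the hypotheses or $\#{\rm PrePer}(\phi,K)$. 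Observe that (b) follows formally from (a): a rational function of degree $d$ has at most $d^n+1$ points of exact period $n$, so if every minimal period is at most $N$ there are at most $\sum_{n\le N}(d^n+1)$ periodic points; and (a), (b), (c) together give the theorem, since writing $\Gamma$ for the (then finite) set of periodic points, every preperiodic point lies in $\bigcup_{i=0}^{L}\phi^{-i}(\Gamma)$ if $L$ bounds all tails, whence $\#{\rm PrePer}(\phi,K)\le\#\Gamma\cdot\sum_{i=0}^{L}d^{\,i}$. It therefore suffices to prove (a) and (c).

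The common engine is a reduction to $S$-unit equations over $K=k(t)$, using that an equation $u+v=1$ has only finitely many solutions with $u,v\in\os^{*}$ — in fact a number of non-constant solutions bounded purely in terms of $s$ — while constant solutions are governed by isotriviality. For (a): let $\mathcal{C}=\{P_0,\dots,P_{n-1}\}$ be a cycle with $\phi(P_i)=P_{i+1}$, and conjugate so that three of the $P_i$ are $0,1,\infty$. At each $\pid\notin S$ reduction commutes with $\phi$, so ``$P_i\equiv P_j\pmod{\pid}$'' is a $\phi$-invariant equivalence on $\mathcal{C}$; as $\phi$ acts as an $n$-cycle, its classes are the residue classes modulo some $b_\pid\mid n$ and $\mathcal{C}$ reduces to a $b_\pid$-cycle of $\bar\phi$. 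Since $0,1,\infty$ are pairwise incongruent modulo every $\pid$, one gets $b_\pid\ge3$; the behaviour of periods under good reduction (Morton--Silverman), which carries no wild part in residue characteristic $0$, gives further control of the ratio $n/b_\pid$ at the places where the cycle actually collapses. Expressing, for $\pid\notin S$, the valuations of the coordinates $x(P_i)$ and of cross-ratios of four orbit points through these combinatorial data, one shows that when $n$ is large there are quadruples of orbit points pairwise incongruent modulo every $\pid\notin S$ — after, if necessary, absorbing the controlled set of places of drastic collapse into a larger set $S'$ of cardinality bounded by $d$ and $s$ — so that their cross-ratios lie in $\mathcal{O}_{S'}^{*}$, and the identity $[P_a,P_b;P_c,P_d]+[P_a,P_c;P_b,P_d]=1$ produces solutions of an $S'$-unit equation. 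Running this over enough quadruples yields more solutions than the $s$-dependent bound allows, unless the cross-ratios in question are constant. But since those cross-ratios recover the coordinates $x(P_i)$ and $0,1,\infty\in\p_1(k)$, a long cycle with constant cross-ratios contains a long consecutive segment lying in $\p_1(k)$, and — by a standard rigidity fact — a degree-$d$ rational map possessing an orbit segment of length exceeding a bound depending only on $d$ and contained in $\p_1(k)$ is defined over $k$; as the conjugation used lay in $\PGL_2(K)$, this makes $\phi$ isotrivial over $K$, contrary to hypothesis. Hence $n\le N(d,s)$.

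Part (c) I would prove in the same spirit. For a strictly preperiodic point with tail $P_0\to P_1\to\cdots\to P_{L-1}\to P_L$, $P_L$ the first periodic point, conjugate to a convenient normalization; the reduced orbit at each $\pid\notin S$ is eventually periodic with tail of length at most $L$, and two orbit points are congruent modulo $\pid$ exactly when both of their indices exceed the reduced tail length and are congruent modulo the reduced period. Extracting cross-ratios of suitable triples or quadruples of tail points that are forced to be $S'$-units for a controlled $S'$, one finds that a long tail over-determines the available $S'$-unit solutions unless those solutions are constant, which again forces a long orbit segment into $\p_1(k)$ and hence $\phi$ isotrivial. This would give $L\le L(d,s)$ and complete the proof.

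The step I expect to be the main obstacle is precisely this $S$-unit reduction: turning a long orbit into a genuinely over-determined system of $S$-unit equations. The delicate points are (i) identifying which places besides the $s$ places of $S$ can enter the divisors of the orbit-coordinate differences and their cross-ratios — the places where the reduced orbit degenerates — and bounding their number by a function of $d$ and $s$ alone; and (ii) showing that the only escape from the ensuing numerical contradiction, namely that all $S$-units produced are constant, forces $\phi$ to be isotrivial over $K$. This is exactly where non-isotriviality is indispensable: for $\phi(z)=z^2$, which is isotrivial and has good reduction everywhere, ${\rm PrePer}(\phi,K)$ is infinite — it contains all roots of unity — and these correspond precisely to the constant solutions of the associated $S$-unit equations.
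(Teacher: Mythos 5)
Your overall architecture is the same as the paper's: bound minimal periods, deduce a bound on the number of periodic points by counting fixed points of a controlled iterate, bound tail lengths, and multiply. Your engine is also the right one: $S$-unit equations extracted from orbit points (your cross-ratio identity is, after normalizing $0,1,\infty$ into the orbit, exactly the relation $\lambda_i+\mu_i=1$ the paper uses), with non-isotriviality invoked to dispose of the constant solutions of $u+v=1$. Your ``standard rigidity fact'' is made precise in the paper as Lemma \ref{pairs}: a non-isotrivial $\phi$ of degree $d$ admits at most $2d$ pairs $(\lambda_1,\lambda_2)\in k^2$ with $\phi(\lambda_1)=\lambda_2$, so a constant run in an orbit has length at most $2d+1$; combined with Zannier's bound $9^{s-1}$ on non-constant solutions this gives Lemma \ref{cp}, which is your argument in the ``no collapse'' case.

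The genuine gap is the step you yourself flag as the main obstacle, and it cannot be repaired as stated: the set of places $\pid\notin S$ where the cycle collapses (where $b_\pid<n$, equivalently where some $\delta_\pid(P_i,P_j)>0$) is \emph{not} bounded by a function of $d$ and $s$. After normalizing $P_0=0$, $P_1=\infty$, collapse at $\pid$ means $\pid$ divides some coordinate difference such as $x_{p^\alpha}$, and the number of such places grows with the height of the orbit points, over which you have no a priori control; so there is no admissible $S'$. The paper avoids enlarging $S$ altogether: when a non-unit coefficient $A_{p^\alpha}\in R_S\setminus R_S^*$ appears, the resulting equation $A_{p^\alpha}x-y=1$ has coefficients with \emph{different} valuations at some $\pid\notin S$, hence (Lemma \ref{goodeq}) admits no constant solutions at all, and Zannier's theorem applied with the \emph{original} $S$ gives at most $9^{s-1}$ solutions. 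This is coupled with a decomposition of the period $n$ into prime powers (Lemmas \ref{prime} and \ref{powerp}), treating each $p^{r}$ via the iterate $\phi^{n/p^{r}}$, which your sketch omits. A second, smaller problem: you lean on the Morton--Silverman description of $n/b_\pid$ via the multiplier, but since the residue field here is the infinite field $k$, the order of the multiplier in $k^*$ is unbounded, which is precisely why the paper states that the results of \cite{MS1} and \cite{Zie} do not apply and replaces them by the arguments of Section \ref{prel}.
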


The outline of the proof of Theorem \ref{mainT} is similar to the one for \cite[Corollary 1.1]{CP}. But in our setting we have the problem that the residue fields are infinite, more precisely they are the algebraically closed field $k$. With global fields one has the advantage that each residue field is finite. This fact plays an important role in almost all proofs in  \cite{CP}. In our situation we need some preliminary lemmas, in particular Lemma \ref{pairs} and Lemma \ref{cp}, to bypass the problem about the infiniteness of the reduced field. 

The condition $S$ not empty is only a technical one and it does not represent a significant restriction.

Next result is an important tool in the proof of Theorem \ref{mainT}.
\begin{theorem}\label{Tper}
Let $k$ be an algebraically closed field of characteristic zero. Let $K$ be the rational function field $K=k(t)$.  Let $S$ be a finite set of places of $K$  of cardinality $s\geq 1$. Let $\phi\in K(z)$ be non isotrivial over $K$ of degree $d$ with good reduction outside $S$. Let $P$ be defined over $K$ and be a periodic point for $\phi$, with minimal period $n$. Then
\beq \label{Bc}n\leq \prod_{p\leq b(d,s)\atop \text{$p$ prime}} \max\left\{\frac{9^{s-1}+1}{2}(2d+1)+2,  p\cdot 3^{2s-1}\right\},\eeq
where $b(d,s)= \frac{9^{s-1}+1}{2}(2d+1)+2$.\end{theorem}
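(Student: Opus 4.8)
The plan is to follow the strategy of bounding periods via reduction modulo several places simultaneously, in the spirit of Morton--Silverman and the function-field adaptation. First I would fix a place $\pid\notin S$ of good reduction for $\phi$. Since $\phi$ has good reduction at $\pid$, the reduction map commutes with iteration, so the reduced point $\bar P$ is periodic for $\bar\phi$ with minimal period $m_\pid$ dividing $n$, and one has the classical dichotomy: either $n=m_\pid$, or $n=m_\pid\cdot r$ where $r$ is a prime $p$ or $n = m_\pid \cdot p \cdot p^e$ type behaviour governed by the local multiplier at the periodic cycle (the ``$p$, $pr$, $rp^e$'' lemma). The key point of the argument is that $m_\pid$ itself must be controlled uniformly in $\pid$. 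For this I would use the hypothesis that $\phi$ is non-isotrivial over $K$: after the good-reduction normalization we may move the periodic orbit, and counting the periodic points of $\bar\phi$ of a given period on $\p_1(k)$, together with the non-isotriviality, forces $\bar P$ to lie in a cycle whose length is bounded by something like $\frac{9^{s-1}+1}{2}(2d+1)+2$, i.e.\ by $b(d,s)$. This is where the earlier Lemma \ref{pairs} and Lemma \ref{cp} enter, precisely to bypass the infiniteness of the residue field $k$: they should give a bound on the number of $K$-rational periodic points of small period, or equivalently a bound on $m_\pid$ that does not depend on $\pid$.

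Next I would take two distinct places $\pid_1,\pid_2$ of good reduction (outside $S$), with associated minimal reduced periods $m_1,m_2\le b(d,s)$. Writing $n = m_i \cdot n_i$, the local analysis at each $\pid_i$ shows that $n_i$ is built out of primes in a controlled way: each $n_i$ is either $1$, a single prime, or a prime times a power of that same prime, with the ``extra'' prime $p$ satisfying a congruence/ramification constraint coming from the multiplier of the cycle. Because $k$ has characteristic zero, the relevant wild part is governed by the ramification of $\pid_i$, and over $K=k(t)$ each place is of degree one, which keeps the constants small. Comparing the factorizations of $n$ arising from $\pid_1$ and from $\pid_2$ and using that the $m_i$ are bounded by $b(d,s)$, I would extract that $n$ divides $\mathrm{lcm}(m_1,m_2)$ times a bounded power of a single small prime; iterating over enough places (or over all places, using that $\phi$ has good reduction outside the finite set $S$) pins down each prime-power component of $n$ separately, yielding the product bound in \eqref{Bc}: for each prime $p\le b(d,s)$ one contributes a factor $\max\{b(d,s),\, p\cdot 3^{2s-1}\}$, the second term being the bound on the $p$-adic valuation contribution (the $3^{2s-1}$ coming from the number of places, exactly as in Lemma \ref{pairs}).

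The main obstacle I anticipate is establishing the uniform bound $m_\pid\le b(d,s)$ on the minimal period of the reduction, independently of $\pid$ — over a number field this is automatic because the reduced periodic points are constrained by the finiteness of the residue field and by good-reduction estimates, but here the residue field is all of $k$, so $\bar\phi$ can have many periodic points of large period. The resolution must use non-isotriviality in an essential way: the ``generic'' fibre being genuinely non-constant means that a $K$-rational periodic point cannot reduce into a long cycle at every place simultaneously, and Lemma \ref{pairs} and Lemma \ref{cp} are presumably the devices that make this quantitative. A secondary, more technical difficulty is bookkeeping the prime-by-prime contributions so that the wild-ramification powers assemble into exactly the stated product $\prod_{p\le b(d,s)}\max\{\,b(d,s),\,p\cdot 3^{2s-1}\,\}$ rather than a weaker bound; this requires care in how the good-reduction places are chosen and in tracking the ramification indices, but it is essentially combinatorial once the uniform bound on $m_\pid$ is in hand.
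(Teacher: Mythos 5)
Your proposal rests on the classical reduction dichotomy ``$n=m_\pid$, $n=m_\pid r$, or $n=m_\pid r p^e$'' of Morton--Silverman and Zieve, but this is precisely the route the paper cannot and does not take: those structure results are useful only when the residue field is finite, whereas here every residue field is the algebraically closed field $k$ of characteristic zero. Concretely, two things break. First, $m_\pid$ (the minimal period of the reduced point) is not bounded in terms of $d$ and $s$: $\bar\phi$ is an endomorphism of $\p_1(k)$ and can have $k$-rational cycles of every length, and nothing in Lemma \ref{pairs} or Lemma \ref{cp} bounds the period of a \emph{reduced} point --- Lemma \ref{cp} bounds the length of a segment of a $K$-rational orbit along which all mutual $\pid$-adic logarithmic distances are equal, which is a different statement. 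Second, even granting a bound on $m_\pid$, the factor $r$ is the multiplicative order of the cycle multiplier in $k^*$, and since $k$ is algebraically closed of characteristic zero it contains roots of unity of every order, so $r$ is also unbounded; and the $p^e$ (wild) part you invoke does not exist in characteristic zero, so the ``ramification'' bookkeeping you describe has no content. You correctly identify the bound on $m_\pid$ as the main obstacle, but the proposal offers no mechanism to overcome it, and comparing factorizations across several places cannot repair this.

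The paper's actual argument never reduces the point modulo $\pid$ and never uses multipliers. It factors $n=p_1^{r_1}\cdots p_m^{r_m}$ and, for each $i$, views $P$ as a periodic point of minimal period $p_i^{r_i}$ for $\phi^{n/p_i^{r_i}}$ (and of period $p_i$ for $\phi^{n/p_i}$). Each prime power is then bounded directly by turning the $\pid$-adic distance identities of Proposition \ref{6.1} into $S$-unit equations $\lambda x+\mu y=1$ along the $K$-rational orbit: either all the orbit points have constant mutual distance, in which case Lemma \ref{cp} (which uses non-isotriviality through Lemma \ref{pairs}) gives $p^r\le b(d,s)$, or some $A_{p^\alpha}\notin R_S^*$, in which case Lemma \ref{goodeq} (Zannier's bound of $9^{s-1}$ solutions) gives $p^r< 3p\cdot 9^{s-1}=p\cdot 3^{2s-1}$. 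This is also where the factor $3^{2s-1}$ comes from --- it is Zannier's $S$-unit count, not Lemma \ref{pairs} as you suggest. As written, your plan has a genuine gap at its central step and would need to be replaced by, or supplemented with, an argument of this $S$-unit type.
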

\noindent To ease notation we will denote by $C(d,s)$ the bound in (\ref{Bc}).

Some ideas to prove Theorem \ref{Tper} come from the proof of \cite[Theorem 7]{CP}. But in  \cite[Theorem 7]{CP}, it plays an important role the description of the possible shape of the minimal periodicity for a periodic points as given in \cite{MS1} or \cite{Zie}. These results do not apply to our situation, because the residue fields are infinite. Therefore in our proof of Theorem \ref{Tper} we use new ideas, mainly contained in Section \ref{prel}, in order to compensate the absence of results as the ones in \cite{MS1} and \cite{Zie}.

The second important tool in the proof of Theorem \ref{mainT} is the following result.
\begin{theorem}\label{Tpreper}
Let $k$ be an algebraically closed field of characteristic zero. Let $K$ be the rational function field $K=k(t)$. Let $S$ be a finite set of places of $K$  of cardinality $s\geq 1$. Let $\phi\in K(z)$ be non isotrivial over $K$ of degree $d$ with good reduction outside $S$. Let $P$  be defined over $K$ and be a preperiodic point for $\phi$. Then
\beq\label{Bpp}\#\left(O_\phi(P)\right)\leq \frac{9^{s+1}+1}{2}(2d+2)\prod_{p\leq b(d,s)\atop \text{$p$ prime}}\max\left\{\frac{9^{s-1}+1}{2}(2d+1)+2,  p\cdot 3^{2s-1}\right\},\eeq
where $b(d,s)= \frac{9^{s-1}+1}{2}(2d+1)+2$.\end{theorem}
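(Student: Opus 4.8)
The plan is to bound the size of the orbit $O_\phi(P)$ of a preperiodic point $P$ by splitting the orbit into its \emph{periodic tail} and its \emph{strictly preperiodic part}, and controlling each separately. Write $m$ for the pre-period and $n$ for the minimal period, so that $\phi^m(P)$ is periodic with minimal period $n$ and $\#O_\phi(P)=m+n$. Theorem \ref{Tper} already gives $n\leq C(d,s)$, so the whole task reduces to bounding the pre-periodic portion, i.e. the length $m$ of the ``tail'' of points $P,\phi(P),\dots,\phi^{m-1}(P)$ that are not periodic. Since $\phi$ has good reduction outside $S$, so does $\phi^n$ (composition preserves good reduction); replacing $\phi$ by $\psi:=\phi^n$ turns $Q:=\phi^m(P)$ into a fixed point of $\psi$, and $P$ lies in the backward orbit of $Q$ under $\psi$. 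The degree of $\psi$ is $d^n$, but one must be careful: we do not want a bound depending on $d^n$, so instead I would iterate a \emph{one-step} argument controlling how far back a preperiodic portal can stretch in terms of $d$ and $s$ only.

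The key technical input I expect to use is a good-reduction estimate on the distance (in terms of the valuations $v_\mfp$) between distinct points in a forward orbit, of the same flavour as the classical results of Morton--Silverman and the lemmas of Section \ref{prel} referenced in the introduction (Lemma \ref{pairs} and Lemma \ref{cp}). Concretely, for a periodic point $Q$ and a point $P$ with $\phi^a(P)=Q$ but $\phi^{a-1}(P)\neq Q$, one shows that for every place $\mfp\notin S$ the reductions $\overline{\phi^i(P)}$ for $0\leq i\leq a$ satisfy strong separation/collision constraints modulo $\mfp$: once two iterates collide mod $\mfp$ the orbit stays collided, and the number of ``fresh'' collisions available is governed by the number of ramification points of $\phi$ mod $\mfp$ (at most $2d-2$ by Riemann--Hurwitz) together with the places in $S$. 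Summing the local contributions over all places and using the product formula on $K=k(t)$ (where the places are the points of $\p_1_k$ and the degree of a nonzero rational function is zero) yields a bound on $m$ of the shape $c(s)\cdot(2d+2)$, matching the factor $\frac{9^{s+1}+1}{2}(2d+2)$ in \eqref{Bpp}. The constants $9^{s\pm1}$ come from tracking, place by place, the at most $9$ possible reduction-behaviours analogous to the bookkeeping already done in the proof of Theorem \ref{Tper} and in \cite{CP}.

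Putting the two pieces together: $\#O_\phi(P)=m+n\leq \frac{9^{s+1}+1}{2}(2d+2)+C(d,s)$. To get the stated product form \eqref{Bpp} one absorbs the additive $C(d,s)$ into a multiplicative bound, using that $C(d,s)\geq 1$ and $\frac{9^{s+1}+1}{2}(2d+2)\geq 1$, so that $m+n\leq \frac{9^{s+1}+1}{2}(2d+2)\cdot C(d,s)$; since $C(d,s)=\prod_{p\leq b(d,s),\ p\text{ prime}}\max\{b(d,s),\,p\cdot 3^{2s-1}\}$ with $b(d,s)=\frac{9^{s-1}+1}{2}(2d+1)+2$, this is exactly the right-hand side of \eqref{Bpp}. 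I would present the argument by first reducing to the fixed-point case for $\psi=\phi^n$ only to set up notation, then running the place-by-place tail estimate directly for $\phi$ (not $\psi$, to keep the dependence on $d$ linear), and finally invoking Theorem \ref{Tper}.

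The main obstacle I anticipate is precisely the step that in the number-field setting of \cite{CP} uses finiteness of residue fields: over $K=k(t)$ the residue field at every place is the infinite field $k$, so one cannot bound the length of a preperiodic tail by ``pigeonhole on residues'' as one would over $\QQ$. This is why the proof must lean on Lemma \ref{pairs} and Lemma \ref{cp} of Section \ref{prel}, which are designed to extract, from good reduction alone, enough separation information to replace the counting-of-residues argument. Getting the bookkeeping of collisions versus ramification to close with the clean constants $9^{s+1}$, $2d+2$ — rather than something weaker — will be the delicate part, and it is where the non-isotriviality hypothesis enters (to ensure $\phi$ genuinely reduces to something of positive degree at places outside $S$, so the ramification count $2d-2$ is meaningful).
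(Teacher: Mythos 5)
Your overall decomposition (periodic part bounded by Theorem \ref{Tper}, plus a separate bound on the length of the preperiodic tail) matches the paper's strategy, and you correctly identify the role of Lemmas \ref{pairs} and \ref{cp} and the reason pigeonhole-on-residues fails over $k(t)$. But the heart of the proof --- the actual bound on the tail length $m$ --- is missing. You propose to control $m$ by counting ``fresh collisions'' modulo each place $\mfp\notin S$, bounding these by the number of ramification points of $\phi_\mfp$ via Riemann--Hurwitz, and then ``summing the local contributions over all places using the product formula.'' This is not carried out, and I do not see how it could be: the residue field at every place is the algebraically closed, infinite field $k$, so there is no a priori reason for two tail iterates to collide modulo $\mfp$ at all, and when they do, the collision encodes a divisibility relation among coordinates of the points rather than any ramification phenomenon of $\phi_\mfp$. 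You never specify what global quantity is summed over the (infinitely many) places outside $S$, nor why that sum is finite, so the claimed bound $m\leq c(s)(2d+2)$ is unsupported. The paper's actual mechanism is different: it passes to $\psi=\phi^N$ with $N\leq C(d,s)$ so that the tail feeds into a genuine fixed point $[0:1]$, writes each tail point in $S$-coprime integral form, uses Lemma \ref{pab} to show $P_{-j}=x_1/(y_1+u_j)$ with $u_j\in R_S^*$, and extracts an explicit $S$-unit equation of the form $\lambda w_b+\mu u_b=1$ to which Zannier's theorem applies through Lemma \ref{goodeq}, giving at most $9^{s-1}$ exceptional tail points beyond the initial segment controlled by Lemma \ref{cp}.

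There is also a structural problem with your plan to run the tail estimate ``directly for $\phi$, not $\psi=\phi^n$, to keep the dependence on $d$ linear'': then $\phi^m(P)$ is merely periodic, not fixed, and the whole apparatus of Lemma \ref{pab} (which requires the orbit to terminate in a fixed point) is unavailable; you give no substitute. Relatedly, the multiplicative factor in (\ref{Bpp}) is not obtained by ``absorbing an additive $C(d,s)$ into a product'': it is the genuine price of decomposing the $\phi$-orbit into at most $N\leq C(d,s)$ threads, each of which is a $\psi$-orbit ending in a fixed point, yielding $\#O_\phi(P)\leq C(d,s)\cdot(M(d,s)+2)$ rather than the sum $m+n$ you bound.
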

\noindent To ease notation we will denote by $D(d,s)$ the bound in (\ref{Bpp}).


The main tool to prove the above theorems is a result about the finiteness of solutions in $S$--units of linear equations, namely \cite[Corollary 4]{Z1}. The idea of using $S$--units in the arithmetic of dynamical systems is due to Narkiewicz (see \cite{N.1}), where he studied dynamics associated to polynomials instead of generic rational functions. 

Some similar results obtained by using $S$--units equations are contained in \cite{C.1},\cite{C}. But all those results are concerning rational functions defined over number fields, where the linear equations have finite set of non degenerate solutions in $S$--units.
In our setting, a linear equation, even with only two addends, can have an infinite set of solutions in $S$--units. One has this problem also when the field $k$ is a finite field. But the infiniteness of the solutions in the case of global function fields is more manageable than the one in our situation. Indeed, for example in our setting we need to ask that the map $\phi$ is non isotrivial (condition that is not necessary in the case of global function field). This problem, about $S$--unit solutions of linear equations, represents another reason for the need of new ideas in addition to the ones already used in \cite{C.1}, \cite{C} and \cite{CP}.

Some results similar to our are given by Morton and Silverman in \cite{MS1} and by Benedetto in \cite{B} and \cite{B.1}. More precisely in \cite{B} there is a result (Theorem A) that characterizes preperiodic points,  for polynomials defined over an arbitrary algebraic function field of dimension one, in terms of the 
canonical height associated to $\phi$. Benedetto in \cite[Remark 5.2]{B} (and in the introduction of \cite{B}) affirms that Theorem A implies, for the non isotrivial polynomials, a bound for the cardinality of the set of preperiodic points, which has a size of the type $O(s\log s)$ (where the O--big constant depends on the degree $d$ of the polynomial). Also the bounds in 
\cite{MS1} and \cite{B.1} are of the form $O(s\log s)$. 

Our techniques are completely different from the ones applied in \cite{B}, \cite{B.1}  and \cite{MS1}. Our methods lead on to prove  some bounds that are worse than the ones in \cite{B}, \cite{B.1}  and \cite{MS1}. But our results hold for rational functions and for preperiodic points.

It is possible to give an explicit value for the bound $B(d,s)$ in Theorem \ref{mainT},  which is presented in the proof of Theorem \ref{mainT}. We have omitted to write it in the statement of Theorem \ref{mainT} because it is huge. The aim of the present work was just to prove the existence of a bound $B(d,s)$ as described in Theorem \ref{mainT}.

Our work is linked with Morton and Silverman's conjecture also known as \emph{Uniform Boundedness Conjecture} for dynamical systems (see \cite{MS1}).  It affirms that for any number field $K$, the cardinality of the set ${\rm PrePer}(\phi,K)$ of a morphism $\phi\colon \p_N\to\p_N$ of degree $d \geq 2$, defined over $K$, is bounded by a number depending only on the integers $d,N$ and on the degree $D$ of the extension $K/\QQ$.
Our Theorem \ref{mainT} gives a statement in the direction of the Uniform Boundedness Conjecture in the case when $K$ is an algebraic function field. 

Note that in the case when $K$ is a number field, in order to have finiteness of the set ${\rm PrePer}(\phi,K)$, it is necessary to set the condition that the degree $d$ of $\phi$ is $\geq 2$. Indeed for each automorphism $A\in {\rm PGL}_2(K)$ of finite order, i.e. $A^n=Id$ for a positive $n$, the set  ${\rm PrePer}(A,K)$ is the whole $\p_1(K)$. In our setting, a non isotrivial $\phi$, even of degree 1, can not have finite order. 

The results proven in this article should have an analogue to any finite extension of $k(t)$ (i.e. algebraic function fields over $k$). The fact that the ring of $S$--integers in $K=k(t)$ is a principal ideal domain plays a crucial role in our proofs. This fact is not true if $K$ is the function field of a curve of positive genus. It would be interesting to find a generalization of our result to any algebraic function fields.

\emph{Acknowledgements.} The author would like to thank Laura De Marco for setting the question, which has motivated the present work. He would like to thank also for the helpful discussions with Laura Paladino, Laura De Marco and Pietro Corvaja. The author would like to thank also Joe Silverman for pointing out a mistake in a previous version of the present work. Furthermore he would like to thank Sebastian Troncoso for suggesting an improvement of the bound in Lemma \ref{pairs}.

%
%
%
\section{Notation and definitions}\label{notdef}
Throughout all the paper we shall use the following notation: $k$ will be an algebraic closed field of characteristic zero; $K$ will be the rational function field $k(t)$. We denote by $R$ the  polynomial ring $k[t]$. Any place of $K$ is either given by a valuation $v_\alpha$ for $\alpha\in k$ or given by the valuation $v_\infty$ at infinity (associated to the point at infinity in $\p_1$). A valuation $v_\alpha$ is the one associated to the polynomial $t-\alpha$.  The valuation at infinity $v_\infty$ is the one associated to the element $1/t$. For any place $\mfp$ of $K$, we will denote the associated valuation by $v_\mfp$, normalized so that $v_\mfp(K)=\ZZ$. See \cite{Ros} or \cite{Sti} for the properties of places over function fields. 

For a finite set $S$ of places of $K$ of cardinality $s$, we set
$$R_S\coloneqq\{x\in K\mid \vmfp(x)\geq 0\ \  \text{for all $\mfp\notin S$}\}$$ 
the ring of $S$--integers and 
$$R_S^*\coloneqq\{x\in K\mid \vmfp(x)=0\ \  \text{for all $\mfp\notin S$}\}$$ 
the group of $S$--units. 

We will always assume that $S$ is an arbitrary fixed non empty finite set of places of $K$. 
We will denote by $s$ the cardinality of $S$. The rank of $R_S^*/k$ is $s-1$. 
The ring $R_S$ is a principal ideal domain and $K$ is the fraction field of the ring $R_S$. Therefore, each point of $K$ can be written in \emph{$S$--coprime integral form}; which means that for each $x\in K$ we may assume that $x=a/b$ with $a,b\in R_S$ and $\min\{v_\mfp(a),v_\mfp(b)\}=0$ for each $\mfp\notin S$ (in this case $a$ and $b$ are said \emph{$S$--coprime}).  

For each given $\phi\in K(z)$ of degree $d$, there exist $f(z),g(z)\in R_S[z]$, coprime polynomials, such that 
\beq\label{nform}\phi(z)=f(z)/g(z), \text{ with } f(z)=f_dz^d+\ldots +f_1z+f_0,\ \ g(z)=g_dz^d+\ldots +g_1z+g_0\eeq
for suitable $f_d,\ldots,g_0\in R_S$ with no common factors in $R_S\setminus k$. When $\phi$ is written in the above form, we shall say that it is written in a \emph{$S$--reduced integral form}. 

Let $\phi$ be written in a $S$--reduced integral form as in (\ref{nform}). We say that $\phi$ has \emph{simple good reduction outside $S$} if the homogeneous resultant of $f$ and $g$ is in $R_S^*$. Here the homogeneous resultant is the one with respect the homogenized polynomials of degree $d$ obtained from $f$ and $g$. 
 
 For each place $\mfp$, recall that the residue field $\mathcal{O}_\mfp/\mathcal{M}_\mfp$, obtained as the quotient of the ring $\mathcal{O}_\mfp=\{x\in K\mid v_\mfp(x)\geq 0\}$ by its maximal ideal $\mathcal{M}_\mfp=\{x\in K\mid v_\mfp(x)> 0\}$, is $k$. For each element $x\in \mathcal{O}_\mfp $, we will denote by $\overline{x}$ the corresponding image of $x$ in $\mathcal{O}_\mfp/\mathcal{M}_\mfp$ of the canonical projection $\mathcal{O}_\mfp\to \mathcal{O}_\mfp/\mathcal{M}_\mfp$. The element $\overline{x}$  is called \emph{the reduction modulo $\mfp$ of $x$}. If $x\notin \mathcal{O}_\mfp$, its reduction modulo $p$ is the point at infinity $\infty=1/0$.
 
If a map $\phi$ is written in $S$--reduced integral form, then it is well defined the map $\phi_\mfp\in k(z)$ obtained from $\phi$ by reduction of its coefficients modulo $\mfp$, for each $\mfp\notin S$. The condition to be of simple good reduction can be reformulated in the following way: $\phi$ has simple good reduction outside $S$ if and only if $\deg \phi_\mfp=\deg\phi$ for all $\mfp\notin S$. 

As an application of  \cite[Corollary 2.13]{BM} by Bruin and Molnar, one sees that each rational map $\phi$ of degree $d$ defined over $K$ admits a $R$--minimal model. By using the notation in \cite{BM} we have that there exists $\psi=A\circ\phi\circ A^{-1}$, for a suitable $A\in{\rm PGL}_2(K)$, with $\psi=F(z)/G(z)$ written in reduced form for each place $\pid$ such that 
$${\rm Res}_R([\phi])={\rm Res}_d(F,G)R.$$

Therefore a rational function $\phi(t)$ defined over $K$ has \emph{good reduction} at $\pid$ if a $R$--minimal model of $\phi$ has simple good reduction at $\pid$. 
For a generalization of \cite[Corollary 2.13]{BM}, see \cite{PS}.

Let $P_1=x_1/y_1,P_2=x_2/y_2$ be two distinct points in $\apl$. By using the notation of  \cite{MS} we shall denote by
\begin{center}$\delta_{\mathfrak{p}}\,(P_1,P_2)=v_{\mathfrak{p}}\,(x_1y_2-x_2y_1)-\min\{v_{\mathfrak{p}}(x_1),v_{\mathfrak{p}}(y_1)\}-\min\{v_{\mathfrak{p}}(x_2),v_{\mathfrak{p}}(y_2)\}$\end{center}
the \emph{$\mathfrak{p}$-adic logarithmic distance}. The logarithmic distance is always non negative and $\delta_{\mathfrak{p}}(P_1,P_2)>0$ if and only if $P_1$ and $P_2$ have the same reduction modulo $\pid$. Note that if $P_1$ and $P_2$ are written in $S$--coprime integral form, then $\delta_{\mathfrak{p}}\,(P_1,P_2)=v_{\mathfrak{p}}\,(x_1y_2-x_2y_1)$ $\forall \pid\notin S$. 

Let ${\rm PGL}_2(R_S)$ be the group of automorphisms defined by a matrix in $GL_2(R_S)$ whose discriminant is in $R_S^*$, i.e. a invertible matrix with entries in $R_S$, whose inverse has entries in $R_S$. Sometimes we will take  in consideration the conjugate of a map $\phi$ with an automorphism $A\in{\rm PGL}_2(R_S)$.  We will use the fact that 
$\delta_{\mathfrak{p}}\,(A(P_1),A(P_2))=\delta_{\mathfrak{p}}\,(P_1,P_2)$
for each $A\in  {\rm PGL}_2(R_S)$. 

\section{Auxiliary results}
An important tool in our proof will be the following result by Zannier in \cite[Corollary 4]{Z1} that here we present in a form adapted to our setting.

\begin{theorem}\label{csu}\emph{\cite[Corollary 4]{Z1}}
Let $\lambda,\mu\in K^*$. Then the equation $\lambda x+\mu y=1$ has at most $9^{s-1}$ solutions $(x,y)\in (R_S^*)^2$ such that $\lambda x/\mu y\notin k^*$.
\end{theorem}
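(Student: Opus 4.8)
\emph{Proof proposal.} The plan is to reduce the equation to the classical two‑term $S$‑unit equation $u+v=1$ and feed it into the function‑field analogue of Mason's $abc$‑theorem over $k(t)$, the point being to exploit that $k(t)$ has genus $0$, so that every resulting bound depends only on $s=\#S$ and not on $\lambda,\mu$. Given a solution $(x,y)\in(R_S^*)^2$, set $u=\lambda x$ and $v=\mu y$, so that $u+v=1$. The hypothesis $\lambda x/(\mu y)=u/v\notin k^*$ is precisely the condition that the solution be \emph{non-degenerate}; it forces $u\notin k$ and $v\notin k$ (if $u\in k$ then $v=1-u\in k$ and $u/v\in k$), hence $u'=du/dt\neq 0$ and $v'=-u'\neq 0$, and also $du/u\neq dv/v$ as differentials (otherwise $d(v/u)/(v/u)=0$, i.e.\ $v/u\in k$).

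The first ingredient I would isolate is the linear‑algebra fact behind everything. For $x\in R_S^*$ the logarithmic differential $dx/x$ has at worst simple poles, all supported on $S$, with residues in $\ZZ$; by Riemann--Roch on the projective line the space $\Omega_S$ of differentials with polar divisor bounded by $\sum_{\mfp\in S}\mfp$ is a $k$‑vector space of dimension $s-1$, and $x\mapsto dx/x$ induces an injection $R_S^*/k^*\hookrightarrow\Omega_S$ (the differential shadow of the fact, recalled in Section \ref{notdef}, that $R_S^*/k$ has rank $s-1$). Since $\lambda,\mu$ are fixed, $du/u=d\lambda/\lambda+dx/x$ and $dv/v=d\mu/\mu+dy/y$ range over fixed affine translates of $\Omega_S$. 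Now differentiating $u+v=1$ gives $u\cdot(du/u)+v\cdot(dv/v)=0$, which, together with $u+v=1$, solves to $u=(dv/v)/(dv/v-du/u)$ and $v=-(du/u)/(dv/v-du/u)$. Thus $u$ is a ratio of two differentials with polar parts supported on $S$ (the contributions of $\lambda$ and $\mu$ to $du/u$ and $dv/v$ cancel in this quotient), and extracting the divisor of the ratio is exactly Mason's inequality for $u+v=1$: it gives that the pole divisor of $u$ --- equivalently that of $v$, and that of $x$ and $y$ up to scalars in $k^*$ --- has degree at most $s-1$.

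This already shows the set of non-degenerate solutions is finite with cardinality bounded in terms of $s$ alone: once the divisor of $x$ is fixed (one of finitely many ``shapes'', each an integer exponent vector on $S$ of bounded size), $x$ is determined up to a constant in $k^*$, and substituting into $\lambda x+\mu y=1$ and comparing coefficients pins the remaining scalars down to boundedly many choices per shape. It remains to replace ``finite, bounded by $s$'' by the explicit constant $9^{s-1}$, and this is where I expect essentially all the difficulty to lie: it is the combinatorial bookkeeping of counting the admissible shapes together with their bounded multiplicities, and showing the total is $\le 9^{s-1}=3^{2(s-1)}$.

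I would attempt this by induction on $s$. The case $s=1$ is trivial ($R_S^*=k^*$, so $u/v\in k^*$ always, and $9^{0}=1$). For the inductive step I would fix a place $\mfp_0\in S$ and group the surviving solutions by their local behaviour at $\mfp_0$ --- essentially the pair $(v_{\mfp_0}(u),v_{\mfp_0}(v))$, which by the degree bound takes only boundedly many values, organised into a few ``regimes'' --- reducing each regime, after a conjugation by an automorphism in $\PGL_2(R_S)$ and a translation of the differentials, to a non-degenerate two‑term $S$‑unit equation over $S\setminus\{\mfp_0\}$, to which the inductive bound $9^{s-2}$ applies. The delicate point --- the real content of \cite[Corollary 4]{Z1} --- is to show that the regimes cost a factor of exactly $9=3^2$ per place peeled off (morally, a factor $3$ for the shape of $u$ at $\mfp_0$ and a factor $3$ for that of $v$) rather than merely some constant, which forces one to use Mason's inequality in its sharp form (with the number of \emph{distinct} zeros and poles, not the crude bound by $s$) in tandem with this recursion; the differential‑geometric part above is routine once Mason's theorem over $k(t)$ is available. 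Finally one checks that the discarded family $u/v\in k^*$ is genuinely negligible: such solutions have $u$ and $v$ both constant, contributing at most one projective solution, so $9^{s-1}$ bounds exactly the solutions counted in the statement.
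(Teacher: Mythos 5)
The paper offers no proof of Theorem \ref{csu} at all: it is quoted verbatim from Zannier \cite[Corollary 4]{Z1}. So the only question is whether your sketch would actually establish the result, and it would not — the failure is at the central step, not merely in the bookkeeping you defer.

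You apply Mason's inequality to $u+v=1$ with $u=\lambda x$, $v=\mu y$ and conclude that the pole divisor of $u$ (hence of $x$, up to the fixed $\lambda$) has degree at most $s-1$. But $u$ and $v$ are not $S$-units: their supports contain the zeros and poles of the arbitrary fixed coefficients $\lambda,\mu$, so the radical entering Mason's inequality is $S\cup\mathrm{supp}(\lambda)\cup\mathrm{supp}(\mu)$, whose size depends on $\lambda$ and $\mu$. Your parenthetical claim that the contributions of $\lambda$ and $\mu$ to $du/u$ and $dv/v$ ``cancel in this quotient'' is false: $du/u=d\lambda/\lambda+dx/x$ and $dv/v=d\mu/\mu+dy/y$, and nothing cancels in $(dv/v)\big/(dv/v-du/u)$. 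A concrete counterexample to the claimed height bound: take $S=\{0,\infty\}$ (so $s=2$), $\lambda=1$, $\mu=1-t^N$; then $(x,y)=(t^N,1)$ lies in $(R_S^*)^2$, satisfies $\lambda x+\mu y=1$, and $\lambda x/\mu y=t^N/(1-t^N)\notin k^*$, yet the pole divisor of $x$ has degree $N$, not $\leq s-1=1$. The entire point of Zannier's Corollary 4 is that the \emph{number} of solutions is bounded uniformly in $\lambda,\mu$ even though their heights are not, so any argument that begins by bounding the heights of the solutions independently of $\lambda,\mu$ cannot work; the standard remedy (in Evertse's number-field arguments as well as in the function-field setting) is to play several solutions off against one another so as to eliminate the unknown coefficients and land on unit equations with controlled coefficients, and your sketch contains no such step. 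Finally, even granting finiteness, the constant $9^{s-1}$ is exactly the part you explicitly leave unproved — the induction on $s$ and the ``factor of $9$ per place peeled off'' are described as expectations, not carried out — so the quantitative statement is not established either.
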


Zannier's article concerns problems as the one in Theorem \ref{csu} but in a much more general setting than our. For some more recent and general results see also \cite{EZ}. 

The divisibility arguments, that we shall use to produce the $S$--unit equations useful to prove our bounds, are obtained starting from the following two facts:

\begin{proposition}\label{5.1}\emph{\cite[Proposition 5.1]{MS}} For all $P_1,P_2,P_3\in\apl$, we have
\par\medskip \centerline{$\delta_{\mathfrak{p}}(P_1,P_3)\geq \min\{\delta_{\mathfrak{p}}(P_1,P_2),\delta_{\mathfrak{p}}(P_2,P_3)\}$.}\end{proposition}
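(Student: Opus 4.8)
The plan is to reduce the inequality to a purely local statement at the place $\mfp$ and then to produce an explicit algebraic identity that reduces everything to the ultrametric inequality for $\vmfp$. Since $\dmfp(P,Q)$ does not change when the homogeneous coordinates of $P$ or of $Q$ are rescaled by a nonzero element of $K$, I would first write each $P_i=x_i/y_i$ in $\mfp$-coprime integral form: $x_i,y_i\in\mathcal{O}_\mfp$ with $\min\{\vmfp(x_i),\vmfp(y_i)\}=0$, which is possible because $\mathcal{O}_\mfp$ is a discrete valuation ring. With this normalization one has $\dmfp(P_i,P_j)=\vmfp(x_iy_j-x_jy_i)$, and moreover $\vmfp(x_i)\geq 0$ and $\vmfp(y_i)\geq 0$ for every $i$, with at least one of the two equal to $0$. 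One may also assume $P_1,P_2,P_3$ are pairwise distinct, the statement being trivial otherwise.

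The core of the argument is the pair of identities
\[y_2(x_1y_3-x_3y_1)=y_3(x_1y_2-x_2y_1)+y_1(x_2y_3-x_3y_2),\]
\[x_2(x_1y_3-x_3y_1)=x_3(x_1y_2-x_2y_1)+x_1(x_2y_3-x_3y_2),\]
each checked by expanding the right-hand side. Because $P_2$ is written in $\mfp$-coprime form, either $\vmfp(y_2)=0$ or $\vmfp(x_2)=0$. In the first case, applying $\vmfp$ to the first identity and using $\vmfp(a+b)\geq\min\{\vmfp(a),\vmfp(b)\}$ together with $\vmfp(y_2)=0$ gives
\[\dmfp(P_1,P_3)=\vmfp\bigl(y_2(x_1y_3-x_3y_1)\bigr)\geq\min\bigl\{\vmfp(y_3)+\dmfp(P_1,P_2),\ \vmfp(y_1)+\dmfp(P_2,P_3)\bigr\},\]
and since $\vmfp(y_1),\vmfp(y_3)\geq 0$ the right-hand side is $\geq\min\{\dmfp(P_1,P_2),\dmfp(P_2,P_3)\}$, as wanted. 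In the second case the identical computation with the second identity, using now $\vmfp(x_1),\vmfp(x_3)\geq 0$, yields the same conclusion.

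I do not anticipate a genuine obstacle: once the right identity is written down the statement is elementary. The only subtlety is that $\mfp$-coprimeness of the coordinates gets used twice — once to collapse $\dmfp$ into a single valuation, and once to guarantee that the various $\vmfp(x_i)$, $\vmfp(y_i)$ produced by clearing denominators are nonnegative — and this is precisely what forces the case distinction on whether $\vmfp(x_2)$ or $\vmfp(y_2)$ vanishes. A slightly slicker alternative would be to exploit the $\PGL_2(\mathcal{O}_\mfp)$-invariance of $\dmfp$ to move $P_2$ to $\infty=[1:0]$, which is legitimate since $(x_2,y_2)$ is a unimodular row over the local ring $\mathcal{O}_\mfp$; writing then $P_1=[a_1:b_1]$ and $P_3=[a_3:b_3]$ in $\mfp$-coprime form, one has $\dmfp(P_1,P_2)=\vmfp(b_1)$, $\dmfp(P_2,P_3)=\vmfp(b_3)$ and $\dmfp(P_1,P_3)=\vmfp(a_1b_3-a_3b_1)$, and the claim follows at once from the ultrametric inequality and $\vmfp(a_1),\vmfp(a_3)\geq 0$.
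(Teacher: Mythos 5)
Your argument is correct, and it is essentially the standard proof of this fact: the paper itself only cites \cite[Proposition 5.1]{MS}, and the identity $y_2(x_1y_3-x_3y_1)=y_3(x_1y_2-x_2y_1)+y_1(x_2y_3-x_3y_2)$ together with its $x$-analogue, the ultrametric inequality, and the case split on which of $\vmfp(x_2),\vmfp(y_2)$ vanishes is precisely how Morton--Silverman prove it. Nothing further is needed.
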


\begin{proposition}\label{5.2}\emph{\cite[Proposition 5.2]{MS}}
Let $\phi\in K(t)$ be a rational function with simple good reduction outside $S$. Then for any $P,Q\in\apl$ we have
$\delta_{\mathfrak{p}}(\phi(P),\phi(Q))\geq \delta_{\mathfrak{p}}(P,Q)$ for each $\mfp\notin S$.
\end{proposition}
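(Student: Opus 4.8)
The plan is to pass to homogeneous coordinates and split the asserted inequality into a ``numerator'' estimate, which is purely formal, and a ``denominator'' estimate, which is where the good reduction hypothesis does its work. Write $\phi$ in an $S$-reduced integral form $\phi=f/g$ and let $F(X,Y),G(X,Y)\in R_S[X,Y]$ be the homogenizations of $f,g$ to degree $d$, so that $\phi([X:Y])=[F(X,Y):G(X,Y)]$; simple good reduction outside $S$ means exactly that $\rho:=\mathrm{Res}(F,G)\in R_S^*$. Write $P=[x_1:y_1]$ and $Q=[x_2:y_2]$ in $S$-coprime integral form and set $a_i=F(x_i,y_i)$, $b_i=G(x_i,y_i)$, all of which lie in $R_S$. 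Since $x_i,y_i$ are $S$-coprime, for $\mathfrak{p}\notin S$ one has $\delta_{\mathfrak{p}}(P,Q)=v_{\mathfrak{p}}(x_1y_2-x_2y_1)$, so it suffices to prove
$$v_{\mathfrak{p}}(a_1b_2-a_2b_1)-\min\{v_{\mathfrak{p}}(a_1),v_{\mathfrak{p}}(b_1)\}-\min\{v_{\mathfrak{p}}(a_2),v_{\mathfrak{p}}(b_2)\}\ \geq\ v_{\mathfrak{p}}(x_1y_2-x_2y_1)$$
for every $\mathfrak{p}\notin S$ (the case $\phi(P)=\phi(Q)$ being trivial with the convention that the distance of a point to itself is $+\infty$).

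For the first term I would use the algebraic identity that $F(X_1,Y_1)G(X_2,Y_2)-F(X_2,Y_2)G(X_1,Y_1)$ is divisible by $X_1Y_2-X_2Y_1$ in $R_S[X_1,Y_1,X_2,Y_2]$: it vanishes upon substituting $X_1=(Y_1/Y_2)X_2$ because $F$ and $G$ are homogeneous of the same degree, and one clears denominators by Gauss's lemma. Evaluating at the $(x_i,y_i)$ gives $a_1b_2-a_2b_1=(x_1y_2-x_2y_1)\,h$ with $h\in R_S$, whence $v_{\mathfrak{p}}(a_1b_2-a_2b_1)\geq v_{\mathfrak{p}}(x_1y_2-x_2y_1)$. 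For the two $\min$ terms I would invoke the classical resultant identities: since $F,G$ are binary forms of degree $d$, there exist homogeneous $u_j,v_j\in R_S[X,Y]$ of degree $d-1$ with $u_1F+v_1G=\rho X^{2d-1}$ and $u_2F+v_2G=\rho Y^{2d-1}$. Evaluating at $(x_i,y_i)$ and taking $\mathfrak{p}$-adic valuations shows $\min\{v_{\mathfrak{p}}(a_i),v_{\mathfrak{p}}(b_i)\}\leq v_{\mathfrak{p}}(\rho)+(2d-1)\min\{v_{\mathfrak{p}}(x_i),v_{\mathfrak{p}}(y_i)\}=0$, using $\rho\in R_S^*$ and $S$-coprimality of $x_i,y_i$; and the minimum is $\geq 0$ since $a_i,b_i\in R_S$, so it equals $0$. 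Substituting both estimates into the displayed inequality finishes the argument.

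The divisibility identity and the resultant relations are entirely standard, so the only conceptual point is the denominator estimate, and this is exactly where the hypothesis enters: good reduction says precisely that $F$ and $G$ have no common zero modulo any $\mathfrak{p}\notin S$, so $F(x_i,y_i)$ and $G(x_i,y_i)$ share no factor there --- equivalently, reducing the point modulo $\mathfrak{p}$ commutes with applying $\phi$. If $\rho$ fails to be an $S$-unit this step, and with it the whole inequality, breaks down. I do not anticipate a genuine obstacle here; the proof is bookkeeping around these two classical facts.
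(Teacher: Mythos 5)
Your argument is correct and is essentially the standard proof of \cite[Proposition 5.2]{MS}, which the paper cites without reproducing: the divisibility of $F(X_1,Y_1)G(X_2,Y_2)-F(X_2,Y_2)G(X_1,Y_1)$ by $X_1Y_2-X_2Y_1$ gives the numerator bound, and the resultant identities $u_jF+v_jG=\rho X^{2d-1},\ \rho Y^{2d-1}$ with $\rho\in R_S^*$ show that $(F(x_i,y_i),G(x_i,y_i))$ is again $S$--coprime, so the two $\min$ terms vanish. No gaps.
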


As a direct application of the previous propositions we have the next result.

\begin{proposition}\label{6.1}\emph{\cite[Proposition 6.1]{MS}}
Let $\phi\in K(t)$ be a rational function with simple good reduction outside $S$.  Let $P\in\apl$ be a periodic point for $\phi$ with
minimal period n. Then the following hold for each  $\mfp\notin S$.
\begin{itemize}
\item $\dpid(\phi^i(P),\phi^j(P))=\dpid(\phi^{i+k}(P),\phi^{j+k}(P))$\ \ for every $i,j,k\in\NN$,
\item Let $i,j\in\NN$ be  such that $\gcd(i-j,n)=1$. Then $\dpid(\phi^i(P),\phi^j(P))=\dpid(\phi(P),P)$.
\end{itemize}
\end{proposition}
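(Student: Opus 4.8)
The plan is to derive both assertions purely formally from Proposition~\ref{5.1} and Proposition~\ref{5.2} together with the periodicity $\phi^n(P)=P$. Throughout I fix $\mfp\notin S$ and, for an integer $a$ with $a\not\equiv 0\pmod n$, set $u(a):=\delta_{\mathfrak{p}}(\phi^a(P),P)$ (so that $\phi^a(P)\neq P$ and this is well defined); note that $u$ is periodic of period $n$ since $\phi^{a+n}(P)=\phi^a(P)$, and that $\delta_{\mathfrak{p}}$ is symmetric in its two arguments.

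For the first bullet: by Proposition~\ref{5.2} we have $\delta_{\mathfrak{p}}(\phi^{a+1}(P),\phi^{b+1}(P))\ge\delta_{\mathfrak{p}}(\phi^a(P),\phi^b(P))$ for all $a,b\in\NN$, so for fixed $i,j$ the sequence $k\mapsto\delta_{\mathfrak{p}}(\phi^{i+k}(P),\phi^{j+k}(P))$ is non-decreasing; since $\phi^n(P)=P$ it is also periodic of period $n$, hence constant, which is precisely the first bullet. In particular, taking $k=\ell n-j$ with $\ell$ large and using periodicity once more, $\delta_{\mathfrak{p}}(\phi^i(P),\phi^j(P))=\delta_{\mathfrak{p}}(\phi^{m}(P),P)=u(m)$ for any positive integer $m\equiv i-j\pmod n$. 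So in the setting of the second bullet, where $\gcd(i-j,n)=1$, it is enough to show $u(m)=u(1)$ for every $m\ge 1$ with $\gcd(m,n)=1$ (the fixed-point case $n=1$ being vacuous).

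For the second bullet I would first record that $u$ is super-additive for the $\mfp$-adic distance: applying Proposition~\ref{5.1} to the triple $\phi^{a+b}(P),\phi^{b}(P),P$ and rewriting $\delta_{\mathfrak{p}}(\phi^{a+b}(P),\phi^{b}(P))=u(a)$ by the shift-invariance just proved, one gets $u(a+b)\ge\min\{u(a),u(b)\}$ whenever $a,b,a+b\not\equiv 0\pmod n$. Iterating this gives $u(cm)\ge u(m)$ for every $c\ge 1$, and $u(m)\ge u(1)$ upon writing $m=1+\cdots+1$. Now choose $c\ge 1$ with $cm\equiv 1\pmod n$, which is possible because $\gcd(m,n)=1$; then periodicity gives $u(cm)=u(1)$, so $u(1)\ge u(m)$. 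Combining the two inequalities yields $u(m)=u(1)$, that is, $\delta_{\mathfrak{p}}(\phi^m(P),P)=\delta_{\mathfrak{p}}(\phi(P),P)$.

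I expect the only slightly delicate points to be bookkeeping ones: realising the ``backward shift'' through a large multiple of $n$ so that the reduction to $u(m)$ is legitimate, and checking that every index entering the super-additivity step is $\not\equiv 0\pmod n$ so that all distances involved are actually defined, with $n=1$ set aside as degenerate. There are no estimates to carry out; the entire content is the remark that a non-decreasing periodic sequence is constant, combined with the standard Bézout-type manipulation against the ultrametric inequality of Proposition~\ref{5.1}.
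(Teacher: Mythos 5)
Your proof is correct and is essentially the intended one: the paper gives no proof of this proposition (it simply cites Morton--Silverman and notes it is ``a direct application'' of Propositions~\ref{5.1} and~\ref{5.2}), and your derivation --- a non-decreasing periodic sequence is constant, plus the ultrametric/B\'ezout iteration for the coprime case --- is exactly the standard argument from \cite{MS}. The bookkeeping caveats you flag (choosing the residue $m\equiv i-j\pmod n$ with $1\le m<n$ so no intermediate index is $\equiv 0\pmod n$, and setting aside $n=1$) are handled correctly.
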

\section{Proofs}
The section is divided in several subsections containing the proofs of some lemmas, the proof of Theorem \ref{Tper}, the proof of Theorem \ref{Tpreper} and finally the proof of Theorem \ref{mainT}.

\subsection{Preliminary lemmas} \label{prel}
We start by giving a very simple lemma, whose proof is an elementary application of Theorem \ref{csu}. Recall that $K$ and $S$ are defined as described in Section \ref{notdef}. 

\begin{lemma}\label{goodeq}
Let $\lambda,\mu\in K^*$ be such that there exists a place $\mfp\notin S$ with $v_\mfp(\lambda)\neq v_\mfp(\mu)$. Then the equation $\lambda x+\mu y=1$ has at most $9^{s-1}$ solutions $(x,y)\in (R_S^*)^2$.
\end{lemma}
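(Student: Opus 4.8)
The plan is to reduce the equation $\lambda x + \mu y = 1$ to a form where Theorem \ref{csu} applies directly, and then to rule out the "excluded" solutions — those with $\lambda x/\mu y \in k^*$ — using the hypothesis that $v_\mfp(\lambda) \neq v_\mfp(\mu)$ for some $\mfp \notin S$. First I would invoke Theorem \ref{csu} as stated: the equation $\lambda x + \mu y = 1$ has at most $9^{s-1}$ solutions $(x,y) \in (R_S^*)^2$ with $\lambda x/\mu y \notin k^*$. So it remains to show that there is no solution $(x,y) \in (R_S^*)^2$ with $\lambda x/\mu y \in k^*$; once that is established, every solution is automatically counted by Theorem \ref{csu} and we are done with the same bound $9^{s-1}$.

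The key step is the following observation. Suppose $(x,y) \in (R_S^*)^2$ is a solution with $\lambda x/\mu y = c$ for some $c \in k^*$. Since $x, y \in R_S^*$, for every place $\mfp \notin S$ we have $v_\mfp(x) = v_\mfp(y) = 0$, and since $c \in k^*$ we have $v_\mfp(c) = 0$ as well. Hence from $\lambda x = c \mu y$ we get
$$v_\mfp(\lambda) = v_\mfp(\lambda) + v_\mfp(x) = v_\mfp(c) + v_\mfp(\mu) + v_\mfp(y) = v_\mfp(\mu)$$
for every $\mfp \notin S$. This contradicts the hypothesis that there exists a place $\mfp \notin S$ with $v_\mfp(\lambda) \neq v_\mfp(\mu)$. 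Therefore no such solution exists, and every solution $(x,y) \in (R_S^*)^2$ of $\lambda x + \mu y = 1$ satisfies $\lambda x / \mu y \notin k^*$, so Theorem \ref{csu} bounds the total number of solutions by $9^{s-1}$.

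There is essentially no obstacle here: the only thing to be slightly careful about is the bookkeeping of valuations of $S$-units (they vanish outside $S$) and the fact that elements of $k^*$ are $S$-units with all valuations zero outside $S$ — indeed $k^* \subseteq R_S^*$. The hypothesis $v_\mfp(\lambda) \neq v_\mfp(\mu)$ is exactly what is needed to exclude the degenerate case, which is why it appears in the statement; it is the natural way to guarantee that the ratio $\lambda x/\mu y$ can never be forced into $k^*$ by an $S$-unit solution. So the proof is just the short valuation computation above followed by a citation of Theorem \ref{csu}.
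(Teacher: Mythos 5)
Your proof is correct and follows the same route as the paper: the paper's proof is a one-line appeal to Theorem \ref{csu} together with the observation that no solution can have $\lambda x/\mu y\in k^*$, which is exactly the valuation computation you spell out. No issues.
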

\begin{proof}
The proof is an application of Theorem \ref{csu}, because there are no $(x,y)\in (R_S^*)^2$ such that $\lambda x/\mu y\in k$. 
\end{proof}

Next lemma contains in the hypothesis the crucial condition of non triviality for the maps.

\begin{lemma}\label{pairs}
Let $\phi\in K(z)$ be a rational function defined of degree $d$ not isotrivial over $K$. The set of pairs $(\lambda_1,\lambda_2)\in (k)^2$ such that $\phi(\lambda_1)=\lambda_2$ is finite and is bounded by $2d$.
\end{lemma}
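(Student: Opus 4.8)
The statement says that for $\phi \in K(z)$ of degree $d$ non-isotrivial over $K$, the set of pairs $(\lambda_1, \lambda_2) \in k^2$ with $\phi(\lambda_1) = \lambda_2$ is finite, of cardinality at most $2d$. Write $\phi = f/g$ in $S$-reduced integral form, with $f, g \in R_S[z] = k[t][z]$ (after absorbing denominators), so $f(z) = \sum_{i=0}^d f_i(t) z^i$ and $g(z) = \sum_{i=0}^d g_i(t) z^i$ with coefficients polynomials in $t$. The condition $\phi(\lambda_1) = \lambda_2$ for constants $\lambda_1, \lambda_2 \in k$ reads $f(\lambda_1) = \lambda_2 \, g(\lambda_1)$ as an identity in $K = k(t)$, i.e. $f(\lambda_1) - \lambda_2 g(\lambda_1) = 0$ in $k[t]$. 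Since $\lambda_1 \in k$, $f(\lambda_1)$ and $g(\lambda_1)$ are honest polynomials in $t$; the equation $f(\lambda_1) = \lambda_2 g(\lambda_1)$ forces $f(\lambda_1)$ and $g(\lambda_1)$ to be proportional over $k$ as polynomials in $t$.

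The plan is to exploit non-isotriviality to control how often this proportionality can happen. First I would dispose of a degenerate case: if $g(\lambda_1) = 0$ as a polynomial in $t$, then we'd need $f(\lambda_1) = 0$ too, contradicting that $f, g$ are coprime in $R_S[z]$ — so actually $\lambda_1$ cannot be a root of $\gcd$; more carefully, $g(\lambda_1) = 0$ in $k[t]$ means $z - \lambda_1$ divides $g$ in $k(t)[z]$, and for $\phi(\lambda_1)$ to be a well-defined constant we'd need $z-\lambda_1$ to divide $f$ as well, again contradicting coprimality (one must be slightly careful: $g(\lambda_1)=0$ could just mean $\lambda_1$ is a pole, i.e. $\phi(\lambda_1) = \infty \notin k$, so such $\lambda_1$ simply don't contribute). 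So for every contributing $\lambda_1$ we have $g(\lambda_1) \neq 0$ in $k[t]$, and then $\lambda_2 = f(\lambda_1)/g(\lambda_1)$ is uniquely determined by $\lambda_1$; hence it suffices to bound the number of $\lambda_1 \in k$ with $f(\lambda_1)/g(\lambda_1) \in k$.

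Now the key step: consider the polynomial identity. For generic $\lambda_1$, the ratio $f(\lambda_1)/g(\lambda_1)$ lies in $k(t) \setminus k$, precisely because $\phi$ is not isotrivial over $K$ — if it were constant for infinitely many $\lambda_1$, I claim one can conjugate $\phi$ to something defined over $k$. Concretely, I would argue that the set of $\lambda_1$ with $f(\lambda_1)/g(\lambda_1) \in k$ is the zero set of a nonzero polynomial in $\lambda_1$: writing $f(\lambda_1) g_j(t)$-type cross-differences, the condition "$f(\lambda_1)$ and $g(\lambda_1)$ are $k$-proportional in $k[t]$" is equivalent to the vanishing of all $2\times 2$ minors $f_i(t)$-coefficient-vector vs $g_i(t)$-coefficient-vector evaluated at $\lambda_1$ — i.e. $f_i(\lambda_1) g_j(\lambda_1) - f_j(\lambda_1) g_i(\lambda_1) = 0$ for all $i,j$, each a polynomial in $\lambda_1$ of degree $\le 2d$ (since $\deg_z f, \deg_z g \le d$, wait — these are degrees in $\lambda_1$, and $f_i g_j$ has $z$-degree... actually the relevant polynomials $h_{ij}(\lambda_1) := f_i(\lambda_1)g_j(\lambda_1) - f_j(\lambda_1)g_i(\lambda_1)$ — hmm, $f_i, g_j$ are the $t$-coefficients, they're fixed polynomials in $t$, not functions of $\lambda_1$). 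Let me restate cleanly: write $f(\lambda_1) = \sum_m a_m(\lambda_1) t^m$, $g(\lambda_1) = \sum_m b_m(\lambda_1) t^m$ where $a_m(\lambda_1), b_m(\lambda_1)$ are polynomials in $\lambda_1$ of degree $\le d$. Proportionality of $(a_m(\lambda_1))_m$ and $(b_m(\lambda_1))_m$ means all $a_m(\lambda_1) b_{m'}(\lambda_1) - a_{m'}(\lambda_1) b_m(\lambda_1) = 0$; each such expression has degree $\le 2d$ in $\lambda_1$. If \emph{some} such cross-term $h(\lambda_1)$ is not the zero polynomial, it has at most $2d$ roots, bounding the contributing $\lambda_1$ by $2d$ and we are done.

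The main obstacle is the final implication: ruling out that \emph{every} cross-term $h(\lambda_1)$ vanishes identically in $\lambda_1$. If all vanish identically, then $f(\lambda_1)$ and $g(\lambda_1)$ are proportional in $k[t]$ for \emph{all} $\lambda_1 \in k$, and I must show this forces $\phi$ to be isotrivial over $K$ — contradicting the hypothesis. The idea: the proportionality ratio $u(\lambda_1) := f(\lambda_1)/g(\lambda_1) \in k(t)$ would be a rational function of $t$ depending on $\lambda_1$; pick two (or a few) values $\lambda_1 = \alpha, \beta$ with $g(\alpha), g(\beta) \neq 0$; I'd want to build an automorphism $A \in \PGL_2(K)$ from these "horizontal slices" conjugating $\phi$ to a $k$-rational map. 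Actually a cleaner route: the condition means the image curve $\phi(\{z = \text{const}\})$ — I think the right formulation is that $\phi$, viewed as a map $\A^1_z \times \A^1_t \dashrightarrow \A^1$, sends each horizontal line $t = *$... no, each \emph{vertical} line $z = \lambda_1$ to a single point when restricted appropriately — this is exactly saying the "graph" degenerates, and by a rigidity/specialization argument (the function field analogue: a family of rational maps all of whose constant sections are constant must be a trivial family up to coordinate change) one deduces isotriviality. I would handle this by writing $\phi$ as a map to $\p^1$ and using that $f(\lambda_1)/g(\lambda_1) = u(\lambda_1)$ defines, as $\lambda_1$ varies, a dominant map $\A^1 \to \p^1$ over $k(t)$; composing $\phi$ with the inverse-type substitution $z \mapsto$ (section) would trivialize it. This rigidity step is the heart of the argument and where I expect to spend the most care; everything else is the elementary minor/degree count giving the bound $2d$.
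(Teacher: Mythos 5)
Your reduction to the $2\times2$ minors $h_{m,m'}(\lambda_1)=a_m(\lambda_1)b_{m'}(\lambda_1)-a_{m'}(\lambda_1)b_m(\lambda_1)$, and the degree count giving at most $2d$ roots once a single minor is not identically zero, are both correct: each minor has degree $\le 2d$ in $\lambda_1$, each contributing $\lambda_1$ determines $\lambda_2=f(\lambda_1)/g(\lambda_1)$ uniquely, and the $\lambda_1$ with $g(\lambda_1)=0$ give $\phi(\lambda_1)=\infty\notin k$, as you note. The genuine gap is exactly the step you flag yourself: ruling out that \emph{every} minor vanishes identically. You leave this to an unproved ``rigidity/specialization'' principle, and the routes you sketch (building $A\in\PGL_2(K)$ from two horizontal slices, trivializing a family of maps) are unexecuted and aimed at something stronger than what is needed. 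In fact no conjugation enters at all. If all minors vanish identically in $X$, the vectors $(a_m(X))_m$ and $(b_m(X))_m$ are linearly dependent over $k(X)$, so $q(X)f(X,t)=p(X)g(X,t)$ in $k[t,X]$ with $p/q\in k(X)$ in lowest terms. Normalizing $f,g$ as in the paper (coprime in $k(t)[z]$ and with coefficients having no common factor in $k[t]\setminus k$), Gauss's lemma makes $f$ and $g$ coprime in $k[t,X]$; hence $f\mid p(X)$ and $g\mid q(X)$, so $f,g\in k[X]$ up to units and $\phi$ is literally defined over $k$, contradicting non-isotriviality. Without some such argument the proof is incomplete, since this is precisely the point where the hypothesis of non-isotriviality is consumed; as written, your proposal asserts rather than proves it.

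For comparison, the paper also expands in powers of $t$: it writes $T(X,Y)=f(X)-Yg(X)=\sum_i h_i(X,Y)t^i$ with $h_i\in k[X,Y]$ and secures the analogous non-degeneracy by showing that $T$ is irreducible in $K[X,Y]$ and is not of the form $\alpha Q(X,Y)$ with $\alpha\in K\setminus k$ and $Q$ defined over $k$ (the irreducibility is proved by reducing modulo a place of good reduction), and then counts common zeros of the $h_i$. Your elimination of $Y$ via the minors is a legitimate and arguably cleaner route to the count of $2d$, but the non-degeneracy step must actually be carried out rather than deferred.
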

\begin{proof} Since $\phi$ is non isotrivial, $d\geq 1$. 

We assume $\phi$ written in reduced normal form as in (\ref{nform})
for suitable $f_d,\ldots,g_0\in  R$ without common factors in $R\setminus k$.

The condition $\phi(\lambda_1)=\lambda_2$ is equivalent to say that the polynomial
$$
T(X,Y)\coloneqq \left(f_dX^d+\ldots +f_1X+f_0\right)-\left(g_dX^d+\ldots +g_1X+g_0\right)Y
$$
is zero at $(X,Y)=(\lambda_1,\lambda_2)$. The equivalence follows from the fact that the two plynomials $f(z),g(z)$ are coprime, so they do not have common roots. The polynomial $T(X,Y)$ is in $K\setminus k$, because $\phi$ is not isotrivial. Furthermore, because of our assumption on the coefficients $f_d,\ldots,g_0$, we have that $T(X,Y)$ is not factorisable in the form 
$$T(X,Y)=\alpha Q(X,Y)$$ 
with $\alpha\in K\setminus k$ and $Q(X,Y)$ polynomial defined over $k$. Moreover, we can see that the polynomial $T(X,Y)$ is irreducible in $K[X,Y]$. Suppose the contrary; since the degree of $T(X,Y)$ with respect $Y$ is one, then the decomposition should be of the form $T(X,Y)=P(X)Q(X,Y)$ for a suitable $P(X)\in K[X]\setminus K$ and $Q(X,Y)\in K[X,Y]\setminus K$. But this is an absurd. Indeed, take $\mfp$ a place of simple good reduction for $\phi$ such that the reduction $P_\mfp(X)$ has positive degree, where $P_\mfp(X)$ denotes the polynomial obtained by reduction modulo $\mfp$ of the coefficients of $P(X)$. Let $\alpha\in k$ be a root of $P_{\mfp}(X)$. The factorization $T(X,Y)=P(X)Q(X,Y)$ would imply that the map $\phi_{\mfp}$ send $\alpha$ to any $\gamma\in k$, that is clearly an absurd. In the last part we used that $\phi_\mfp(\overline{x})=\overline{\phi(x)}$ for each $x\in K$, which follows from the fact that $\phi$ has good reduction at $\mfp$ (see for example \cite[Theorem 2.18]{Sil.2}).


We may consider $T(X,Y)$ as a polynomial in $k[t,X,Y]$, that is possible by our choice of $f(z)$ and $g(z)$. Let us collect $T(X,Y)$ with respect $t$. If the degree of $T(X,Y)$ with respect $t$ is $n$, for each $i\in\{0,1,\ldots,n\}$, we denote by $h_i(X,Y)$ the polynomial in $k[X,Y]$ such that
$$T[X,Y]=h_n(X,Y)t^n+\ldots+h_1(X,Y)t+h_0(X,Y).$$
From the previous remarks we know that the polynomials $h_i$'s have no common irreducible factors. This is important to know because we have that $\phi(\lambda_1)=\lambda_2$ if and only if $h_i(\lambda_1,\lambda_2)=0$ for all $i\in\{0,1,\ldots,n\}$. Note that for each $i\in\{0,1,\ldots,n\}$, such that $h_i(X,Y)$ is not the zero polynomial, there exist two polynomials $a_i(X)\in k[X]$ and $b_i(X,Y)\in k[X,Y]$ such that $h_i(X,Y)=a_i(X)b_i(X,Y)$ and either $b_i(X,Y)=1$ or the degree of $b_i(X,Y)$ with respect $Y$ is exactly one and its total degree is bounded by $d+1$. In both cases the degree of the $a_i$'s is bounded by $d$. 
If there exists an index $i$ such that $b_i(X,Y)=1$ and $a_i(X)\neq 0$, then the number of solutions $(\lambda_1,\lambda_2)\in k^2$ of $T(X,Y)=0$ is bounded by $d$. Otherwise there are at most $2d$ solutions $(\lambda_1,\lambda_2)\in k^2$. 
%
\end{proof}

\begin{lemma}\label{cp}
Let $\phi\in K(z)$ be a rational function not isotrivial over $K$.   Let $d$ be the degree of $\phi$. Let $\{P_0,P_1,\ldots,P_{n-1}\}$ be a set of $n$ distinct points of $K\cup\{\infty\}$ with the property that $\phi(P_i)=P_{i+1}$, for each $i\in\{0,\ldots n-1\}$, and $\dpid(P_i,P_j)=\dpid(P_0,P_1)$ for each distinct $i,j\in \{0,\ldots n-2\}$ and for each $\mfp\notin S$. Then
\beq\label{n}n\leq \frac{9^{s-1}+1}{2}(2d+1)+2.\eeq
\end{lemma}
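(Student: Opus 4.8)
\textbf{Proof plan for Lemma \ref{cp}.}

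The plan is to reduce the statement to a counting problem for solutions of an $S$-unit equation, using the hypothesis on the logarithmic distances to control the relevant quantities. First I would put everything in $S$-coprime integral form: write $P_i = x_i/y_i$ with $x_i, y_i \in R_S$ and $\min\{v_\mfp(x_i), v_\mfp(y_i)\} = 0$ for all $\mfp \notin S$. Then, by the remark in Section \ref{notdef}, $\dpid(P_i, P_j) = v_\mfp(x_i y_j - x_j y_i)$ for each $\mfp \notin S$. The hypothesis says all these cross-differences $x_i y_j - x_j y_i$ (for distinct $i, j \in \{0, \dots, n-2\}$) have the same valuation at every place outside $S$ as the single quantity $D := x_0 y_1 - x_1 y_0$; hence each ratio $(x_i y_j - x_j y_i)/D$ lies in $R_S^*$. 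The idea is to now exploit the Plücker-type identity
$$
(x_i y_j - x_j y_i)(x_k y_\ell - x_\ell y_k) + (x_i y_k - x_k y_i)(x_\ell y_j - x_j y_\ell) + (x_i y_\ell - x_\ell y_i)(x_j y_k - x_k y_j) = 0
$$
valid for any four indices, which after dividing through by $D^2$ becomes a relation $u + v + w = 0$ with $u, v, w \in R_S^*$, i.e. (dividing by $-w$) an $S$-unit equation $X + Y = 1$.

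Next I would address the two places that are $\infty$ or otherwise excluded from the ``constant distance'' hypothesis. The point $P_{n-1}$ plays a special role (distances to it are not assumed constant), and one of the points could be $\infty$; this is why the bound has the ``$+2$'' — I expect to discard at most two of the points and work with the remaining $n - 2$, all of which are finite and pairwise at constant logarithmic distance. So, relabeling, assume $P_0, \dots, P_{m-1}$ with $m \geq n - 2$ are finite points with $\dpid(P_i, P_j)$ independent of the pair, and we want to bound $m$. For a fixed pair of base indices, say $0$ and $1$, and a varying index $j \geq 2$, the Plücker identity with indices $(0, 1, j)$ — more precisely one needs four indices, so take $(0,1,2,j)$ — produces for each $j$ an $S$-unit solution of $\lambda X + \mu Y = 1$ for fixed $\lambda, \mu \in K^*$ built from the $0,1,2$ data. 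By Theorem \ref{csu} (or Lemma \ref{goodeq}), there are at most $9^{s-1}$ such solutions whose ratio is not in $k^*$; I would argue that distinct $j$ give distinct solutions of this equation except possibly when the associated ratio lies in $k^*$, and then separately bound the number of ``constant'' solutions.

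The remaining, and I expect main, obstacle is precisely the degenerate case: bounding how many indices $j$ can give rise to solutions with $\lambda X/\mu Y \in k^*$, which Zannier's theorem does not count. This is where Lemma \ref{pairs} and the non-isotriviality hypothesis must enter. The plan is: if two indices $j, j'$ give the same value in $k^*$ for the relevant ratio, then reducing modulo a place $\mfp$ of good reduction, the pairs $(\bar P_i, \bar P_j)$ satisfy an algebraic relation with coefficients in $k$ forced by this constancy; combined with $\overline{\phi(P)} = \phi_\mfp(\bar P)$ and the structure $\phi(P_i) = P_{i+1}$, this should force many reductions $\bar P_i$ (or many pairs $(\bar P_i, \bar P_{i+1})$) to satisfy $\phi_\mfp(\lambda_1) = \lambda_2$ with $\lambda_1, \lambda_2 \in k$ — but in fact one wants this at the level of $K$, using that the distances being literally constant (not just mod one place) rigidifies the configuration. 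Lemma \ref{pairs} then caps the number of such ``bad'' indices by $2d$. Carefully combining: the non-degenerate $j$'s number at most $9^{s-1}$, the degenerate ones at most (something like) $2d$ per value and a bounded number of values, plus the $2$ discarded points and the base points $0, 1, 2$; optimizing the bookkeeping should land exactly on $n \leq \frac{9^{s-1}+1}{2}(2d+1) + 2$. I would expect the factor $\frac{1}{2}$ to come from the fact that each $S$-unit equation solution is associated to an unordered pair or to a symmetry $j \leftrightarrow$ something, so that the $9^{s-1}$ non-degenerate solutions and the $2d+1$ (or $2d$) degenerate count interleave to give roughly $\frac{1}{2}(9^{s-1}+1)(2d+1)$ points.
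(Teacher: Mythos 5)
Your plan follows the paper's proof in all essentials: the paper conjugates by an element of $\PGL_2(K)$ so that $P_0=0$, $P_1=\infty$, $P_2=1$, whereupon the constant-distance hypothesis makes each remaining $P_i$ an $S$-unit $\lambda_i$ with $1-\lambda_i$ also an $S$-unit (your Plücker identity is the unnormalized form of this same computation), then applies Theorem \ref{csu} to bound the number of $\lambda_i\notin k$, and handles the degenerate solutions exactly as your final ``interleaving'' picture suggests — the $\lambda_i\in k$ form consecutive runs along the orbit, each of length at most $2d+1$ by Lemma \ref{pairs} since $\phi(\lambda_i)=\lambda_{i+1}$ with both in $k$. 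The only wobble in your sketch, the detour through reductions modulo $\mfp$, is unnecessary: after normalization the degenerate points are themselves constants in $k$, so Lemma \ref{pairs} applies directly over $K$.
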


\begin{proof}We assume $n>2$ (otherwise the statement is trivially true). 
Up to conjugation of $\phi$ we may assume that $P_0=0$ and $P_1=\infty$. It is sufficient to take $A\in {\rm PGL}_2(K)$ that send $P_0$ to $0$ and $P_1$ to $\infty$ and take $A\circ\phi\circ A^{-1}$ (that is again non isotrivial) instead of $\phi$ and $A(P_i)$ instead of $P_i$. Note that $\dpid(A(P_i),A(P_j))=\dpid(P_i,P_j)-\dpid(P_0,P_1)=0$ for all distinct $i,j\in\{0,\ldots n-1\}$ and $\pid\notin S$. For each $i\in\{2,\ldots n-1\}$ let $x_i,y_i\in R_S$ such that $P_i=x_i/y_i$ is written in $S$--coprime integral form. By considering the fact that the $p$--adic distances $\dpid(P_0,P_i)=v_\mfp(x_i)$  and $\dpid(P_1,P_i)=v_\mfp(y_i)$ are zero for all $\pid\notin S$, one sees that there exists an $S$--unit $u_i$ such that $P_i=u_i$.  Furthermore we consider for each $i\in\{3,\ldots n-1\}$ the $\pid$--adic distance $\dpid(P_2,P_i)$ and we obtain that $u_i-u_2$ is a $S$--unit. Therefore there exists an $S$--unit $u_{2,i}$ such that the following equality hold:
$$\frac{u_i}{u_2}+\frac{u_{2,i}}{u_2}=1.$$ Hence $u_i$ is of the shape $u_i=u_2\lambda_i$, where $\lambda_i\in R_S^*$ is such that there exists $\mu_i\in R_S^*$ so that $\lambda_i+\mu_i=1$. Hence, we have the following situation:
$$0\stackrel{\phi}{\mapsto}\infty\stackrel{\phi}{\mapsto}u_2\stackrel{\phi}{\mapsto}u_2\lambda_3\stackrel{\phi}{\mapsto}\ldots \stackrel{\phi}{\mapsto}u_2\lambda_{n-1}.$$
Up to conjugation by $z\mapsto u_2^{-1}z$ we may assume that $u_2=1$. So we reduce to the case 
\beq\label{lambda}0\stackrel{\phi}{\mapsto}\infty\stackrel{\phi}{\mapsto}1\stackrel{\phi}{\mapsto}\lambda_3\stackrel{\phi}{\mapsto}\ldots \stackrel{\phi}{\mapsto}\lambda_{n-1},\eeq
where $\lambda_i\in R_S^*$ is such that there exists $\mu_i\in R_S^*$ so that $\lambda_i+\mu_i=1$.
 By Theorem \ref{csu} we have that the number of the $\lambda_i$'s in the orbit in (\ref{lambda}) such that $\lambda_i\notin k$ is bounded by $\frac{9^{s-1}-1}{2}$. So there are at most $\frac{9^{s-1}+1}{2}$ portions of orbit in (\ref{lambda})of consecutive $\lambda_i$'s where each $\lambda_i\in k$. By Lemma \ref{pairs} each such a portion of orbit in (\ref{lambda}) can not contain more than $2d+1$ elements. Therefore $n\leq \frac{9^{s+1}+1}{2}(2d+1)+2$. 
\end{proof}
To ease notation we shall denote by $A(d,s)$ the number $\frac{9^{s+1}+1}{2}(2d+1)+2$. 
\subsection{Proof of Theorem \ref{Tper}}

We begin with a lemma that is a direct application of Lemma \ref{cp}.

\begin{lemma}\label{prime}
Let $\phi$ be a non isotrivial  rational function defined over $K$ with simple good reduction outside $S$. Let $d$ be the degree of $\phi$. Let $P\in \apl$ be a periodic point for $\phi$ of minimal period $p$ with $p$ a prime number. Then 
$p\leq A(d,s)$, where $A(d,s)$ is the bound given in (\ref{n}).
\end{lemma}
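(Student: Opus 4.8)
The plan is to reduce Lemma \ref{prime} to Lemma \ref{cp} by exhibiting, from a periodic point of prime period $p$, an orbit segment of the kind that Lemma \ref{cp} constrains. So first I would let $P \in \apl$ be periodic of minimal period $p$, with $p$ prime, and set $P_i = \phi^i(P)$ for $i \in \{0,1,\dots,p-1\}$; these are $p$ distinct points since $p$ is the minimal period. The goal is to check that this set $\{P_0,\dots,P_{p-1}\}$ satisfies the hypotheses of Lemma \ref{cp}: the cyclic relation $\phi(P_i) = P_{i+1}$ is immediate (indices mod $p$), so the only real content is the equidistance condition $\dpid(P_i,P_j) = \dpid(P_0,P_1)$ for all distinct $i,j$ and all $\mfp \notin S$.

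The equidistance is exactly where Proposition \ref{6.1} enters. Since $\phi$ has simple good reduction outside $S$ and $P$ is periodic of period $p$, the first bullet of Proposition \ref{6.1} gives $\dpid(P_i,P_j) = \dpid(P_{i+k},P_{j+k})$ for all $i,j,k$, and the second bullet gives $\dpid(P_i,P_j) = \dpid(P_1,P_0)$ whenever $\gcd(i-j,p) = 1$. Because $p$ is prime, $\gcd(i-j,p) = 1$ holds for every pair of distinct $i,j \in \{0,\dots,p-1\}$ (the difference $i-j$ is a nonzero residue mod $p$, hence a unit). Thus all pairwise logarithmic distances among the $P_i$ are equal to $\dpid(P_0,P_1)$; in particular this holds for the restricted index range $\{0,\dots,p-2\}$ demanded in Lemma \ref{cp}. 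Hence the hypotheses of Lemma \ref{cp} are met with $n = p$.

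Applying Lemma \ref{cp} then yields $p \leq \frac{9^{s-1}+1}{2}(2d+1) + 2 = A(d,s)$, which is the claimed bound. (One should note the harmless edge case $p \leq 2$, which is trivially below $A(d,s)$ since $d \geq 1$ for a non-isotrivial map, matching the convention in the proof of Lemma \ref{cp} where $n > 2$ is assumed.) I do not anticipate a genuine obstacle here — the lemma is essentially a packaging statement. The one point requiring a little care is making sure simple good reduction (not merely good reduction) is what is assumed, so that Propositions \ref{5.2} and \ref{6.1} apply verbatim; this is why the hypothesis is stated with "simple good reduction outside $S$" rather than just "good reduction outside $S$." Everything else is a direct substitution of $n = p$ into the previously established machinery.
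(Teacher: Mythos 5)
Your proposal is correct and follows essentially the same route as the paper: invoke the second bullet of Proposition \ref{6.1} (using that $\gcd(i-j,p)=1$ for all distinct indices since $p$ is prime) to get the equidistance condition, then apply Lemma \ref{cp} with $n=p$. The extra remarks on the edge case $p\leq 2$ and on the ``simple good reduction'' hypothesis are sound but not needed beyond what the paper's own two-line argument already contains.
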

\begin{proof}
Proposition \ref{6.1} affirms that $\dmfp(\phi^i(P),\phi^j(P))=\dmfp(\phi(P),P)$ for each $0\leq j<i<p$. So it is enough to apply Lemma \ref{cp} to the set of points $\{\phi^i(P)\mid 0\leq i\leq p-1\}$.
\end{proof}
Next Lemma bounds the minimal periodicity of the form $p^k$ for a prime $p$. As said in the introduction we use the same ideas applied in the proof of \cite[Theorem 7]{CP}. For the reader's convenience we rewrite those ideas adapted to our situation.

\begin{lemma}\label{powerp}
Let $\phi\in K(z)$ with simple good reduction outside $S$ and  not isotrivial over $K$.  Let $d$ be the degree of $\phi$. Let $P\in \apl$ be a periodic point for $\phi$ of minimal period $p^r$ with $p$ prime number. Then 
$$p^r\leq  \max\left\{\frac{9^{s-1}+1}{2}(2d+1)+2,  p\cdot 3^{2s-1}\right\}.$$
\end{lemma}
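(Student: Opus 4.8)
The plan is to adapt the argument of \cite[Theorem 7]{CP} to bound the minimal period $p^r$ when it is a prime power. I would start by invoking Lemma~\ref{prime} to handle the base case $r=1$, so from now on assume $r\geq 2$ and write $n=p^r$. The key structural fact I would use is Proposition~\ref{6.1}: for a periodic point $P$ of minimal period $n$, the logarithmic distance $\dpid(\phi^i(P),\phi^j(P))$ depends only on $v_\mfp(i-j)$ (the $p$-adic valuation of $i-j$), since $\gcd((i-j)/p^{v_p(i-j)},n)=1$ forces it to equal $\dpid(\phi^{p^{v_p(i-j)}}(P),P)$ after shifting. So define, for $0\leq \ell\leq r$, the quantity $e_\ell=\dpid(\phi^{p^\ell}(P),P)$; these are nondecreasing in $\ell$ by Proposition~\ref{5.1} applied to the chain, and $e_r$ corresponds to distance between a point and its image under the identity, hence we really only care about $e_0\leq e_1\leq\cdots\leq e_{r-1}$.

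The heart of the argument is to look at the cycle $Q_i=\phi^i(P)$ at each ``level'' $\ell$: consider the points $\{\phi^{jp^{\ell}}(P)\mid 0\leq j\leq p-1\}$, which form an orbit of length $p$ under $\psi=\phi^{p^\ell}$, a map of degree $d^{p^\ell}$ with simple good reduction outside $S$. But directly applying Lemma~\ref{prime} to $\psi$ would give a degree-dependent bound that blows up with $\ell$, which is useless. Instead I would, following \cite{CP}, renormalize: conjugate by an element of $\mathrm{PGL}_2(R_S)$ (which preserves all $\dpid$) to move three consecutive points of the level-$\ell$ cycle to $0,\infty,1$, and then extract $S$-unit relations from the fact that, after dividing out by the common distance $e_\ell$ at the previous level, the remaining coordinates and their differences are $S$-units. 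The point is that between consecutive levels the ``gain'' in logarithmic distance is controlled, and one gets either that $p$ itself is small (bounded by $A(d,s)$, from the level where the cycle still looks like the Lemma~\ref{cp} situation) or that the number of levels $r$ is bounded purely in terms of $s$, because each level forces a genuinely new $S$-unit solution of a two-term equation and Theorem~\ref{csu}/Lemma~\ref{goodeq} caps how many such solutions (hence how many levels) can occur — this is where the $3^{2s-1}$-type factor comes from.

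More concretely, I expect the dichotomy to run as follows. Fix the largest $\ell$ with $e_\ell=e_0$ (all distances equal up to this level); on the orbit of length $p^{\ell+1}$ under $\phi$ restricted to those first levels, all pairwise distances between distinct points are equal, so Lemma~\ref{cp} applies and gives $p^{\ell+1}\leq \frac{9^{s-1}+1}{2}(2d+1)+2 = A(d,s)$, in particular $p^{\ell}\le A(d,s)$. For the remaining levels $\ell<j\le r$, the distance strictly increases, $e_{j-1}<e_j$; writing the relevant points in $S$-coprime integral form and using that the strict increase means a nontrivial valuation appears, each such step yields a linear $S$-unit equation with a genuine solution, and Theorem~\ref{csu} bounds the total number of such steps by something like $2(s-1)$ (two solutions $\lambda_i,\mu_i$ per nontrivial equation, $9^{s-1}$ of them — here I'd need to be careful to get the stated exponent), so $p^{r-\ell}\le 3^{2s-1}$. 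Combining, $p^r = p^\ell\cdot p^{r-\ell}\le \max\{A(d,s),\,p\cdot 3^{2s-1}\}$ once one checks the two regimes don't multiply but rather one dominates — the $p\cdot 3^{2s-1}$ appearing because in the ``many levels'' regime we still pay one factor of $p$ for the very last jump.

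The main obstacle I anticipate is getting the bookkeeping exactly right so that the two contributions combine into a $\max$ rather than a product, and correctly tracking which $\mathrm{PGL}_2(R_S)$ conjugations are available at each level (the renormalization must keep coefficients $S$-integral and the relevant points $S$-coprime, so one needs the principal-ideal-domain property of $R_S$ at each stage). A secondary subtlety is that $\phi^{p^\ell}$ has large degree, so any step that would invoke a degree-dependent lemma must instead be phrased purely in terms of distances and $S$-unit equations; this is precisely the role played by Proposition~\ref{6.1}, which decouples the distances from the iterate's degree. Once those points are handled, the inequality should follow by the counting above together with Lemma~\ref{prime} and Lemma~\ref{cp}.
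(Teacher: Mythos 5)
Your proposal takes a genuinely different route from the paper, and the route has a gap at its crux. You factor the period as $p^r=p^\ell\cdot p^{r-\ell}$, bound $p^{\ell+1}\le A(d,s)$ via Lemma \ref{cp} on the levels where the distance is constant, bound $p^{r-\ell}$ by a unit-equation count on the remaining levels, and then hope the two contributions ``combine into a $\max$ rather than a product.'' They do not: from $p^\ell\le A(d,s)$ and $p^{r-\ell}\le p\cdot 3^{2s-1}$ the only available conclusion is $p^r\le A(d,s)\cdot p\cdot 3^{2s-1}$, and nothing in your setup forces one of the two factors to be trivial. This is not bookkeeping; it is the missing idea. A second, independent problem is the mechanism behind $p^{r-\ell}\le 3^{2s-1}$: what Theorem \ref{csu} can cap is the \emph{number of levels} $j$ with $e_{j-1}<e_j$ (each producing one new solution of some two-term unit equation), and a bound of, say, $9^{s-1}$ on the number of such levels only gives $p^{r-\ell}\le p^{9^{s-1}}$, which is useless for the stated inequality.

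The paper avoids both problems by replacing your factorization with an all-or-nothing dichotomy on the whole cycle. Writing $P_i=\phi^i(P)$ with $P_0=0$, Propositions \ref{5.1} and \ref{6.1} give $P_i=A_ix_1/y_i$ in $S$-coprime form with $A_i=1$ whenever $p\nmid i$. Either every $A_i$ lies in $R_S^*$ --- then all pairwise distances $\dpid(P_i,P_j)$ coincide and Lemma \ref{cp} bounds the \emph{entire} period $p^r$ at once --- or some $A_{p^\alpha}\notin R_S^*$ with $\alpha$ minimal. In the latter case one works with the single fixed equation $A_{p^\alpha}x-y=1$: every index $b$ with $b\not\equiv 0,1\pmod{p}$ (about $p^{r-2}(p-2)$ of them, or the indices $i\equiv 3\pmod 4$ when $p=2$) yields, via $P_b=x_1/(y_1+u_b)$ and $A_{p^\alpha}u_b-u_{p^\alpha}=u_{\alpha,b}$, a distinct $S$-unit solution of that one equation, so Lemma \ref{goodeq} caps the number of such indices by $9^{s-1}$ and hence $p^r< p\cdot 3^{2s-1}$ directly. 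The factor $p$ in the bound comes from the density of the residue classes being counted, not from a ``last jump.'' To salvage your scheme you would have to show that the regime with many strictly increasing levels forces $p^\ell$ to be absorbed into the count, which is exactly what the paper's dichotomy accomplishes and your decomposition does not.
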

\begin{proof}
Up to take a suitable conjugate of $\phi$ by an element in ${\rm PGL}_2(R_S)$ we may assume that $P=0$. To ease notation we denote by $P_i=\phi^i(P)$ for each $0\leq i\leq p^{r}-1$.  By applying Proposition \ref{5.1}, for each $0\leq i\leq p^{r}-1$ and $\mfp\notin S$, we have that $\dmfp(P_i,P_0)\geq\dmfp(P_1,P_0)$, where the equality holds for each $i$ not divisible by $p$. Therefore the cycle of $P_0$ is of the shape 
$$ P_0=0\mapsto x_1/y_1\mapsto A_2x_1/y_2\mapsto \ldots \mapsto A_ix_1/y_i\mapsto\ldots \mapsto x_1/y_{p^r-1}\mapsto [0:1]$$
where each point is written in a $S$--coprime integral form, $A_i\in R_S$ and $A_i=1$ for each $i$ coprime with $p$. If each $A_i$ is in $R_S^*$, by Proposition \ref{6.1} we have that  $\dpid(P_i,P_j)=\dpid(P_0,P_1)$ for each distinct indexes $i,j$ and $\mfp\notin S$. Thus we can apply Lemma \ref{cp} and deduce that $p^k\leq \frac{9^{s-1}+1}{2}(2d+1)+2$. 
Otherwise there exists an integer $j$ such that $A_j\notin R_S^*$. Let $\alpha$ be the smallest integer with this property, so $A_{p^\alpha}$ is not an $S$--unit.

\emph{Case $p=2$.}
For each arbitrary $i\equiv 3\mod 4$ with $0<i<2^r$, we are going to define a solution in $S$--units of the equation 
$\lambda x+y=1$
for a $\lambda\notin R_S^*$ and apply Lemma \ref{goodeq}. 
Let $\alpha>1$. Let $i$ be as above. By Proposition \ref{6.1} we have $\dpid(P_1,P_i)=\dpid(P_0,P_1)=\dpid(P_1,P_{2^\alpha})$, for all $\pid\notin S$. Then
there exist two $S$--units $u_i, u_{2^\alpha}$ such that $P_i=\frac{x_1}{y_1+u_i}$ and $P_{2^\alpha}=\frac{A_{2^\alpha}x_1}{ A_{2^\alpha}y_1+u_{2^\alpha}}$. Again by $\dpid(P_0,P_1)=\dpid(P_i,P_{2^\alpha})$, there exists an $S$--unit $u_{i,\alpha}$ such that $A_{2^\alpha}\frac{u_i}{u_{2^\alpha}}-\frac{u_{i,\alpha}}{u_{2^\alpha}} =1$. By Lemma \ref{goodeq} there are at most $9^{s-1}$ different possible values for $u_i$.
 If $\alpha=1$, for each $i$ as above we have $\dpid(P_1,P_i)=\dpid(P_0,P_2)$ and $\dpid(P_0,P_1)=\dpid(P_1,P_2)$. Then there exist two $S$--units $u_i, u_{2}$ such that $P_i=\frac{x_1}{y_1+A_2u_i}$ and $P_{2}=\frac{A_{2}}{  A_{2}y_1+u_{2}}$. As before, we have $\dpid(P_0,P_1)=\dpid(P_i,P_{2})$.  Hence there exists an $S$--unit $u_{i,2}$ such that $A_{2}^2\frac{u_i}{u_{2}}-\frac{u_{i,2}}{u_{2}} =1$. Again, by Lemma \ref{goodeq}, there exist at most $9^{s-1}$ different possible values for $u_i$.
Note that the positive odd integer $i$ such that $i-1<2^r$ and $4\nmid i-1$ is equal to $2^{r-2}$. Therefore $2^r\leq 4 \cdot 9^{s-1}<3\cdot p\cdot 9^{s-1}$ with $p=2$.

 \emph{Case $p>2$.} Let $b$ be of the form 
 \beq\label{b} b=m\cdot p+i\eeq 
 with $m\in\{0,1,\ldots p^{r-2}\}$ and $i\in\{2,3,\ldots,p-1\}$.   By Proposition \ref{6.1}, we have $\dpid(P_0,P_{b})=\dpid(P_1,P_{b})=v_\pid(x_1)$ , for any $\pid\notin S$. Hence there exists an $S$ unit  $u_{b}$ such that
\beq\label{k}P_{b}=\frac{x_1}{y_1+u_{b}}.\eeq
Since $\dpid(P_1,P_{p^\alpha})=v_\pid(x_1)$, there exists a $S$--unit $u_{p^\alpha}$ verifying $$P_{p^\alpha}=\frac{A_{p^\alpha}x_1}{A_{p^\alpha}y_1+u_{p^\alpha}}.$$
Proposition \ref{6.1} tells us that $\dpid(P_{p^\alpha},P_b)=v_\pid(x_1)$, for every $\pid\notin S$. By identity (\ref{k}), there exists $u_{\alpha, b}\in R_S^*$ such that $A_{p^\alpha} u_b- u_{p^\alpha}=u_{\alpha,b}$. There are exactly $(p^{e-2}+1)(p-2)$ integers $b$ of the form as in (\ref{b}). The pair $(u_b/u_{p^\alpha}, u_{\alpha,b}/u_{p^\alpha})\in (R_S^*)^2$ is a solution of $A_{p^\alpha}x-y=1$, where $A_{p^\alpha}\notin R_S^*$. By Lemma \ref{goodeq}, there are at most only $9^{s-1}$ possible values for $u_b/u_{p^\alpha}$. Hence  $(p^{r-2}+1)(p-2)\leq 9^{s-1}$, i. e. $p^r\leq p^2\left(\frac{9^{s-1}}{p-2}-1\right)<3\cdot p\cdot 9^{s-1}$. 
\end{proof}

We conclude this subsection with the proof of Theorem \ref{Tper}. Let us assume that $\phi\in K(z)$ is not isotrivial over $K$ with good reduction outside $S$. 
By the result due to Bruin and Molnar \cite[Corollary 2.13]{BM} we may assume that $\phi$ is a minimal model of its conjugacy class. Therefore $\phi$ has simple good reduction at a prime $\mfp$ if and only if has good reduction at a prime $\mfp$. Therefore $\phi$ has also simple good reduction outside $S$.

Let us take in consideration the factorization in prime factors of $n$, the minimal period of $P$ as given in the hypothesis of Theorem \ref{Tper}:
\beq\label{factn}n=p_1^{r_1}\cdot \ldots\cdot p_m^{r_m}.\eeq
By applying Lemma \ref{powerp} to the map $\phi^{n/p_i^{r_i}}$, for each prime $p_i$ in the above factorization, we have that the minimal period of $P$ for the map $\phi^{n/p_i^{r_i}}$ is $p_i^{r_i}$. Therefore we have 
\beq\label{bpp}p_i^{r_i}\leq  \max\left\{\frac{9^{s-1}+1}{2}(2d+1)+2,  p_i\cdot 3^{2s-1}\right\}.\eeq
For each prime $p_i$ in the above factorization (\ref{n}), we have that $P$ is a periodic point for $\phi^{n/p_i}$ with minimal period $p_i$. By applying Lemma \ref{prime} to the map $\phi^{n/p_i}$ we have 
\beq\label{bp}p_i\leq  \frac{9^{s-1}+1}{2}(2d+1)+2.\eeq
Putting together (\ref{factn}), (\ref{bpp}) and (\ref{bp}) we have that 
$$n\leq \prod_{p\leq b(d,s)\atop \text{$p$ prime}} \max\left\{\frac{9^{s-1}+1}{2}(2d+1)+2,  p\cdot 3^{2s-1}\right\},$$
where $b(d,s)= \frac{9^{s-1}+1}{2}(2d+1)+2$.

\subsection{Proof of Theorem \ref{Tpreper}} We reduce our problem to the study of the solutions in $S$--units of some equations that will be obtained by applying the divisibility arguments contained in Proposition \ref{5.1}, Proposition \ref{5.2} and Proposition \ref{6.1}. The strategy of the proof is taken from the one in \cite[Theorem 1]{CP}, but here we apply our Lemma \ref{cp}. We shall apply also many times the following result (\cite[Lemma 4.1]{CP}), that is an application of the previous mentioned three propositions. As in the previous subsection we may assume that $\phi$ is a minimal model of its conjugacy class. Therefore assuming that $\phi$ has good reduction outside $S$, it follows that $\phi$ has simple good reduction outside $S$ too. 

\begin{lemma}[Lemma 4.1 in \cite{CP}]\label{pab}
Let
\begin{equation}\label{cn} P=P_{-m+1}\mapsto P_{-m+2}\mapsto \ldots\mapsto P_{-1}\mapsto P_0=[0:1]\mapsto [0:1]\end{equation}
 be an orbit for a rational function $\phi$ defined over $K$, with simple good reduction outside $S$. For any $a,b$ integers such that $0<a<  b\leq m-1$ and $\pid\notin S$,  we have
\beq\label{eqdis}\dpid(P_{-b},P_{-a})=\dpid(P_{-b},P_0)\leq \dpid(P_{-a},P_0).\eeq
\end{lemma}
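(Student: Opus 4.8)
The plan is to prove Lemma \ref{pab} directly from the three divisibility propositions, following the structure used in \cite{CP}. First I would observe that the orbit in \eqref{cn} is eventually at the fixed point $[0:1]$, i.e. $\phi(P_0)=P_0=[0:1]$, and that for $0<a<b\le m-1$ the points $P_{-b}$ and $P_{-a}$ are $b-a$ and $a$ steps, respectively, away from $P_0$ (reaching it after further applications of $\phi$). The key mechanism is Proposition \ref{5.2}: since $\phi$ has simple good reduction outside $S$, the logarithmic distance is non-decreasing along the orbit, so $\dpid(\phi^k(Q),\phi^k(Q'))\ge\dpid(Q,Q')$ for all $k\ge 0$ and all $\pid\notin S$.

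Next I would establish the equality $\dpid(P_{-b},P_{-a})=\dpid(P_{-b},P_0)$. Applying $\phi^{a}$ to the pair $(P_{-b},P_{-a})$ sends it to $(P_{-b+a},P_0)$; applying $\phi$ enough more times fixes $P_0$ and pushes $P_{-b+a}$ toward $P_0$. More directly: by Proposition \ref{5.2} applied with $k=b$ iterations to $(P_{-b},P_{-a})$ we land on $(\phi^{b}(P_{-b}),\phi^{b}(P_{-a}))=(P_0,P_0)$ — that is degenerate, so instead I would iterate only $b-1$ times, or better, compare via the triangle-type inequality of Proposition \ref{5.1}. The cleaner route: apply Proposition \ref{5.2} to the pair $(P_{-b},P_{-a})$ to get, after $a$ steps, $\dpid(P_{-b+a},P_0)\ge\dpid(P_{-b},P_{-a})$; and after $b$ steps to $(P_{-a},P_0)$ we would again degenerate, so I would instead run the argument of \cite[Lemma~4.1]{CP} verbatim: using that $P_0$ is fixed, $\dpid(P_{-a},P_0)=\dpid(\phi^{a}(P_{-a}),\phi^{a}(P_0))\ge\dpid(P_{-a},P_0)$ is vacuous, but $\dpid(P_{-b},P_0)=\dpid(\phi^{b-a}(P_{-b}),\phi^{b-a}(P_0))=\dpid(P_{-a},P_0)$ — wait, that gives equality, not inequality, because $\phi^{b-a}(P_{-b})=P_{-a}$ and $\phi^{b-a}(P_0)=P_0$. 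So Proposition \ref{5.2} yields $\dpid(P_{-b},P_0)\le\dpid(P_{-a},P_0)$, which is the second assertion of \eqref{eqdis}. For the first equality, I would use Proposition \ref{5.1} with the three points $P_{-b},P_{-a},P_0$ together with the monotonicity: $\dpid(P_{-b},P_0)\ge\min\{\dpid(P_{-b},P_{-a}),\dpid(P_{-a},P_0)\}$, and since $\dpid(P_{-b},P_{-a})\le\dpid(P_{-b},P_0)$ (again by Proposition \ref{5.2}, pushing $(P_{-b},P_{-a})$ forward $a$ steps so that the second coordinate becomes $P_0$ — here one uses that $\phi^{a}(P_{-b})=P_{-b+a}$, not $P_0$, so one needs the extra fixed-point steps: $\phi^{b}$ collapses it, but $\dpid$ is monotone, so $\dpid(P_{-b},P_{-a})\le\dpid(P_0,P_0)$ is again degenerate) — the honest statement is that $\dpid(P_{-b},P_{-a})\le\dpid(P_{-b},P_0)$ follows because applying $\phi$ repeatedly eventually sends $P_{-a}$ to $P_0$ while $P_{-b}$ travels along its orbit, and monotonicity along that common number of steps gives the bound after noting $P_0$ is the common limit; combining with the reverse inequality from Proposition \ref{5.1} forces equality.

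The main obstacle I anticipate is bookkeeping the indices so that each application of Proposition \ref{5.2} compares a genuinely non-degenerate pair, and correctly exploiting that $P_0=[0:1]$ is a fixed point (so that iterating does not move it). Concretely, the subtlety is that $\phi^{b-a}$ maps $(P_{-b},P_0)$ to $(P_{-a},P_0)$, giving the inequality $\dpid(P_{-b},P_0)\le\dpid(P_{-a},P_0)$ in the \emph{correct} direction, whereas a careless choice of exponent would collapse both points and give nothing. Once the right exponents are fixed, the equality $\dpid(P_{-b},P_{-a})=\dpid(P_{-b},P_0)$ is exactly \cite[Lemma~4.1]{CP}, whose proof transfers without change since Propositions \ref{5.1}, \ref{5.2} and \ref{6.1} are available in our setting verbatim. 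I would therefore present the proof as: (i) monotonicity gives $\dpid(P_{-b},P_0)\le\dpid(P_{-a},P_0)$ by pushing forward $b-a$ steps; (ii) monotonicity gives $\dpid(P_{-b},P_{-a})\le\dpid(P_{-b},P_0)$ similarly; (iii) Proposition \ref{5.1} on $\{P_{-b},P_{-a},P_0\}$ gives the reverse of (ii); hence equality in \eqref{eqdis}.
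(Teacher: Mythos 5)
Your steps (i) and (iii) are sound: pushing the pair $(P_{-b},P_0)$ forward $b-a$ steps with Proposition \ref{5.2}, and using that $P_0=[0:1]$ is fixed, gives $\dpid(P_{-b},P_0)\le\dpid(P_{-a},P_0)$; then Proposition \ref{5.1} in the arrangement $\dpid(P_{-b},P_{-a})\ge\min\{\dpid(P_{-b},P_0),\dpid(P_0,P_{-a})\}$ yields $\dpid(P_{-b},P_{-a})\ge\dpid(P_{-b},P_0)$. (Note that the paper itself does not reprove this statement; it imports it from \cite{CP}.)

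The genuine gap is your step (ii), which you repeatedly flag ("degenerate", "vacuous") but never actually resolve. A single application of Proposition \ref{5.2} to the pair $(P_{-b},P_{-a})$ moves \emph{both} entries: after $j$ steps one gets $\dpid(P_{-b},P_{-a})\le\dpid(P_{-b+j},P_{-a+j})$, and the only nondegenerate choice making the second entry equal to $P_0$ is $j=a$, which gives $\dpid(P_{-b},P_{-a})\le\dpid(P_{-(b-a)},P_0)$ --- not $\dpid(P_{-b},P_0)$. Since by (i) the quantities $\dpid(P_{-j},P_0)$ are non-increasing in $j$, one has $\dpid(P_{-(b-a)},P_0)\ge\dpid(P_{-b},P_0)$, so this bound is in general strictly weaker than what you need; "monotonicity along a common number of steps" can never compare $P_{-b}$ itself with $P_0$. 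The missing idea is to combine Propositions \ref{5.1} and \ref{5.2} along a chain: set $Q_i=\phi^{i(b-a)}(P_{-b})$, so that $Q_0=P_{-b}$, $Q_1=P_{-a}$, and $Q_N=P_0$ once $N(b-a)\ge b$. Proposition \ref{5.2} gives $\dpid(Q_i,Q_{i+1})=\dpid\bigl(\phi^{i(b-a)}(P_{-b}),\phi^{i(b-a)}(P_{-a})\bigr)\ge\dpid(P_{-b},P_{-a})$ for every $i$, and iterating Proposition \ref{5.1} along $Q_0,Q_1,\ldots,Q_N$ yields $\dpid(P_{-b},P_0)=\dpid(Q_0,Q_N)\ge\min_{0\le i<N}\dpid(Q_i,Q_{i+1})\ge\dpid(P_{-b},P_{-a})$. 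With this replacement for (ii), your outline closes into a correct proof.
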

We may alway assume that $\phi$ and $P=P_0$ are as in the hypothesis of Lemma \ref{pab}. Indeed it is enough to take a suitable iterate $\phi^N$, where $N$ is bounded by the number $C(d,s)$ in (\ref{Bc}) of Theorem \ref{Tper}. By proving a bound $M(d,s)$ for the integer $m$ as in (\ref{cn}) we will prove that we may take as $D(d,s)$ any number such that 
\beq\label{Dds}D(d,s)\geq C(d,s)\cdot (M(d,s)+2).\eeq
So the proof of Theorem \ref{Tpreper} will follows from the following result.

\begin{lemma}\label{Bds}
Let $\phi$ and the $P_i$'s be as in Lemma \ref{pab}. Assume that $\phi$ is not isotrivial over $K$. Then 
$$m\leq A(d,s)+9^{s-1}-1,$$ 
where the number $A(d,s)$ is the one given in (\ref{n}) of Lemma \ref{cp}.
\end{lemma}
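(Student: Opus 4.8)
The plan is to mimic the proof of Lemma \ref{cp}, but now working with a preperiodic (strictly pre-tail) orbit instead of a purely periodic one. First I would normalize: after conjugating by a suitable element of $\mathrm{PGL}_2(K)$ I may assume $P_0=[0:1]$ and (since $m>2$, the small cases being trivial) that $P_{-1}=\infty$; this uses that conjugation does not change the logarithmic distances up to an additive constant and that $\phi^A$ is again non isotrivial over $K$. Lemma \ref{pab} then gives, for $0<a<b\le m-1$, the equality $\dpid(P_{-b},P_{-a})=\dpid(P_{-b},P_0)$ and the inequality $\dpid(P_{-b},P_0)\le\dpid(P_{-a},P_0)$ for all $\mfp\notin S$; combined with $\dpid(P_{-a},\infty)$ arguments this lets me write each $P_{-i}$ in $S$--coprime integral form and extract $S$--units.

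Next I would run the divisibility/$S$--unit extraction. Because $\dpid(P_{-i},P_0)=\dpid(P_{-i},\infty)=0$ after the normalization fails in general (the distances to $P_0$ need only be bounded below, not zero), the cleanest route is: use $\dpid(P_{-i},P_{-1})=\dpid(P_0,P_{-1})$ type relations coming from Lemma \ref{pab} to show $P_{-i}$ differs from $P_{-1}$ by an $S$--unit, hence $P_{-i}=x/(y_{-1}+u_{-i})$ with $u_{-i}\in R_S^*$; then compare $P_{-i}$ with one fixed earlier point $P_{-a}$ via $\dpid(P_{-i},P_{-a})=\dpid(P_{-a},P_0)$ (an equality from Lemma \ref{pab}, not merely an inequality), producing for each $i$ an equation $\lambda_{-i}+\mu_{-i}=1$ in $S$--units where the coefficient ratio $\lambda_{-i}/\mu_{-i}$ equals $u_{-i}/u_{-a}$ up to a fixed factor. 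The points $P_{-i}$ with $u_{-i}/u_{-a}\notin k^*$ are then controlled by Theorem \ref{csu}: there are at most $\tfrac{9^{s-1}-1}{2}$ such $i$ — actually, being careful about the count, at most $9^{s-1}-1$ of the indices $i$ can force a ``jump'' out of $k$.

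The remaining indices fall into runs of consecutive $P_{-i}$ all lying (up to the fixed normalization) in $k$, i.e. runs $\lambda_{-i}\in k$ with $\phi(\lambda_{-(i+1)})=\lambda_{-i}$; by Lemma \ref{pairs} the set of pairs $(\lambda,\phi(\lambda))\in k^2$ has size at most $2d$, so each such run has length at most $2d+1$, and together with the endpoints $P_0,P_{-1}$ this yields $m\le A(d,s)+9^{s-1}-1$ after bookkeeping, where $A(d,s)=\tfrac{9^{s+1}+1}{2}(2d+1)+2$ is the bound from Lemma \ref{cp}. The main obstacle I anticipate is organizing the $S$--unit extraction so that one genuinely gets equalities (rather than one-sided inequalities) of logarithmic distances among the $P_{-i}$ for $i$ ranging over a long interval — Lemma \ref{pab} only gives equality for the pair $(P_{-b},P_{-a})$ with the \emph{larger} index on one side — and in converting ``$\lambda_{-i}/\mu_{-i}\in k^*$'' into a genuine orbit segment inside $k$ so that Lemma \ref{pairs} applies verbatim; handling the point $\infty=P_{-1}$ (and the possibility that some intermediate $P_{-i}=\infty$) inside the ``values in $k$'' count is the fiddly part, but it only costs additive constants of size $O(1)$ absorbed into the $2$ and the $9^{s-1}-1$ terms.
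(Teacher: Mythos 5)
There is a genuine gap: you never isolate the index at which $\dmfp(P_{-i},P_0)$ first drops below $\dmfp(P_{-1},P_0)$, and that is precisely the step that makes the stated bound come out. The paper's argument splits the tail into two phases. On the maximal initial segment $P_{-N},\ldots,P_0$ on which $\dmfp(P_{-i},P_0)=\dmfp(P_{-1},P_0)$ for all $\mfp\notin S$, Lemma \ref{pab} forces \emph{all} pairwise logarithmic distances to coincide, so Lemma \ref{cp} applies verbatim as a black box and gives $N+1\leq A(d,s)$; no run-in-$k$ analysis has to be redone there. Past that segment one takes the first index $a$ with $T_{1,a}=x_1/x_a\notin R_S^*$ as the comparison point: the equality $\dpid(P_{-b},P_{-a})=\dpid(P_{-b},P_0)=v_\pid(x_1/T_{1,b})$, combined with $P_{-b}=x_1/(y_1+u_b)$, produces the single \emph{fixed} equation (\ref{fe}) whose two coefficients have ratio $x_1/x_a=T_{1,a}$. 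Since this ratio is not an $S$--unit, Lemma \ref{goodeq} --- not Theorem \ref{csu} --- applies: there are at most $9^{s-1}$ admissible $u_b$ and no degenerate solutions to worry about at all, whence $m\leq a+9^{s-1}\leq A(d,s)+9^{s-1}-1$.

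Your plan instead compares everything with one point and tries to absorb the degenerate solutions (coefficient ratio in $k^*$) into runs of consecutive points lying in $k$, bounded via Lemma \ref{pairs}. That analysis is exactly the content of Lemma \ref{cp}; running it a second time over the whole tail costs an extra factor. Your equation is not the symmetric $x+y=1$ of Lemma \ref{cp}, so Theorem \ref{csu} only yields $9^{s-1}$ non-degenerate indices, hence up to $9^{s-1}+1$ runs each of length up to $2d+1$, i.e.\ roughly $2A(d,s)+9^{s-1}$ --- strictly worse than the claimed $A(d,s)+9^{s-1}-1$. The bookkeeping you defer does not close. Two further points. First, the equality in Lemma \ref{pab} is $\dpid(P_{-b},P_{-a})=\dpid(P_{-b},P_0)$ (the distance to the \emph{farther} point), not $\dpid(P_{-a},P_0)$ as you write; it is precisely this $b$--dependent quantity $v_\pid(x_1/T_{1,b})$ that must be cancelled against $x_b=x_1/T_{1,b}$ and $y_b=(y_1+u_b)/T_{1,b}$ to arrive at a fixed equation, so the direction matters. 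Second, normalizing $P_{-1}=\infty$ by a general element of ${\PGL}_2(K)$ is not harmless here: unlike in Lemma \ref{cp}, the pairwise distances are not all equal, so no conjugation makes them all zero, and a general conjugation destroys the good-reduction hypothesis underlying Lemma \ref{pab}. The paper avoids any such normalization and works directly with the $S$--coprime coordinates $x_i/y_i$.
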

\begin{proof}
For each index $-m\leq i\leq 1$ we assume that $P_i=x_i/y_i$ is written in $S$--coprime integral form. By Lemma \ref{pab} we have that for each $-m\leq -j<-i\leq $ there exists an $S$--integer $T_{i,j}$ such that $x_i=T_{i,j}x_j$. 

Consider the $\pid$--adic distance between the points $P_{-1}$ and $P_{-j}$. Again by Lemma \ref{pab}, we have
$$\dpid(P_{-1},P_{-j})=v_\pid(x_1y_j-x_1y_1/T_{1,j})=v_\pid(x_1/T_{1,j}),$$
for all $\pid\notin S$. Then, there exists an $S$--unit $u_j$ such that
$y_j=\left(y_1+u_j\right)/{T_{1,j}}$. Thus
\beq\label{Pj} P_{-j}=\frac{x_i/T_{1,j}}{\left(y_1+u_j\right)/{T_{1,j}}}=\frac{x_1}{y_1+u_j}.\eeq

Note that the maximal index $N$ such that
\beq\label{N}\dmfp(P_{-N},P_0)=\dmfp(P_{-1},P_0)\eeq
for all $\mfp\notin S$ is such that 
$N+1\leq A(d,s)$, where $A(d,s)$ is the number in (\ref{n}). Indeed by applying Lemma \ref{pab} we have that $\dmfp(P_{-i},P_{-j})=\dmfp(P_{-1},P_0)$ for each indexes $-N\leq -i< -j\leq 0$. Therefore if $m=N$ we are done. Note that condition (\ref{N}) with $N=m$ implies that $T_{1,i}\in R_S^*$ for each $-m\leq -i\leq -1$.  

Suppose that there exists a index $-m\leq -a< -1$ such that $T_{1,a}\notin R_S^*$. Let $a$ be the minimum index with this property. 
Consider the $\pid$--adic distance between the points $P_{-a}$ and $P_{-b}$ for each $a<b$.  By Lemma \ref{pab} and by (\ref{Pj}), we have:
$\dpid(P_{-b},P_{-a})=v_\pid\left(x_a((y_1+u_b)/T_{1,b})-(x_1/T_{1,b})y_a\right)=v_\pid(x_1/T_{1,b})=\dmfp(P_{-b},P_0)$,
for all $\pid\notin S$. Then there exists $w_b\in R_S^*$ such that
\beq\label{fe}\frac{x_1}{x_a{y_1}-x_1{y_a}}w_b-\frac{x_a}{x_a{y_1}-x_1{y_a}}u_b=1.\eeq
Note that this last equation satisfies the hypothesis of Lemma \ref{goodeq}, so there are at most $9^{s-1}$ possibilities for the $S$--unit $u_b$. Therefore by (\ref{Pj}) the point $P_{-b}$ may assume at most $9^{s-1}$ possibilities. The previous arguments about the integer $N$ tells us that $a+1\leq A(d,s)$. Therefore we have that 
$m\leq a+9^{s-1}\leq A(d,s)+9^{s-1}-1$.
\end{proof}
From Lemma \ref{Bds} we see that the bound $M(d,s)$ in (\ref{Dds}) is equal to $A(d,s)+9^{s-1}-1$. Therefore we can take
$$D(d,s)=\frac{9^{s+1}+1}{2}(d^2+2)\prod_{p\leq p_{\pi(b(d,s))}\atop \text{$p$ prime}}\max\left\{\frac{9^{s-1}+1}{2}(2d+1)+2,  p\cdot 3^{2s-1}\right\}$$
that is bigger than $(A(d,s)+9^{s-1}+1)\cdot C(d,s)$.

\subsection{Proof of Theorem \ref{mainT}}
It is possible to bound the cardinality of the set of periodic points in $\p_1(K)$ in terms only of $d$ and $s$. Let $C(d,s)$ and $D(d,s)$ the bounds given in Theorem \ref{Tper} and Theorem \ref{Tpreper} respectively. Let 
$$N(d,s)=\prod_{p \ \text{prime}}p^{m_p(C(d,s))},$$
where $m_p(C(d,s))=\max \{{\rm ord}_p(z)\mid z\in\NN,\ z\leq C(d,s)\}$. Therefore, by Theorem \ref{Tper},  the cardinality of the set of periodic points in $p_1(K)$ for $\phi$ is bounded by $d^{N(d,s)}+1$. By Theorem \ref{Tpreper}, the set ${\rm PrePer}(\phi,K)$ has cardinality bounded by $d^{D(d,s)}\left(d^{N(d,s)}+1\right)$.

\end{document}